\theoremstyle{plain}
\newtheorem{theorem}{Theorem}[section]
\newtheorem{definition}[theorem]{Definition}
\newtheorem{lemma}[theorem]{Lemma}
\newtheorem{corollary}[theorem]{Corollary}
\theoremstyle{remark}
\newtheorem{remark}[theorem]{Remark}
\DeclarePairedDelimiterX\intff[2]{[}{]}{#1,#2}
\DeclarePairedDelimiterX\intfo[2]{[}{)}{#1,#2}
\DeclarePairedDelimiterX\intof[2]{(}{]}{#1,#2}
\DeclarePairedDelimiterX\intoo[2]{(}{)}{#1,#2}
\DeclarePairedDelimiter{\pars}{(}{)}
\DeclarePairedDelimiter{\bracks}{[}{]}
\DeclarePairedDelimiter{\braces}{\lbrace}{\rbrace}
\DeclarePairedDelimiterX{\setof}[2]{\lbrace}{\rbrace}{#1\,{:}\,#2}
\DeclarePairedDelimiterX{\bracksof}[2]{[}{]}{#1\,\delimsize\vert\,#2}
\DeclarePairedDelimiterX{\parsof}[2]{(}{)}{#1\,\delimsize\vert\,#2}
\DeclarePairedDelimiterXPP\lnorm[2]{}\lVert\rVert{_{#1}}{#2}
\def\Cap{\mathtt{Cap}}
\def\1{\mathbbm{1}}
\def\dif{\mathrm{d}}
\def\diam{\mathrm{diam}}
\title{Minkowski sum of fractal percolation and random sets}
\author{Tianyi Bai, Xinxin Chen, Yuval Peres}
\begin{document}
\maketitle

\begin{abstract}
In this paper, we prove that hitting probability of Minkowski sum of fractal percolations can be characterized by capacity. Then we extend this result to Minkowski sum of general random sets in $\mathbb Z^d$, including ranges of random walks and critical branching random walks, whose hitting probabilities are described by Newtonian capacity individually.
\end{abstract}

\section{Introduction}
Capacity is a concept from potential theory that is useful in studying long-range hitting probability of a set. Consider a finite set $A\subset\mathbb Z^d$, for each $0<\beta<d$, the (discrete) Newtonian capacity with parameter $\beta$ is
\begin{equation}\label{eq:def_cap}
\Cap_\beta (A):=\pars*{\inf_{\mu}\sum_{x,y\in A}\mu(x)\mu(y)(|x-y|\vee 1)^{-\beta}}^{-1},
\end{equation}
where the infimum is taken over all probability measures $\mu$ on $A$, and $|x-y|$ means Euclidean distance. 
Let $(S_n)$ be a simple random walk in $\mathbb Z^d$ started from the origin, it is well-known that (see Lawler and Limic \cite[Proposition 6.5.1]{lawler2010random}) if $0\in A$ and $|x|\ge2\diam(A)$,
\begin{equation}\label{eq:eg_cap}
\mathbb P_0((S_n)\text{ hits }(x+A))\asymp |x|^{2-d}\Cap_{d-2}(A),
\end{equation}
where $\diam(A)$ is the diameter of $A$, and the notation $f\asymp g$ means $C_1 g\le f\le C_2 g$ for some $C_1,C_2>0$ only depending on dimension $d$.

Recently, it is shown in Asselah, Okada, Schapira and Sousi \cite[Theorem 1.6]{asselah2023branching} that hitting probability estimates for Minkowski sum of simple random walks can be given in a similar pattern: Let $d>2k$, let $\mathcal R_1,\dots,\mathcal R_k$ be the ranges of independent simple random walks,  i.e.
\[
\mathcal R_i=\{S^{(i)}_0,S^{(i)}_1,\dots\},
\]
where for each $i=1,2,\dots,k$, $(S^{(i)}_n)$ is a simple random walk started from $0\in\mathbb Z^d$, independent of each other. For every finite set $A\subset\mathbb Z^d$ containing the origin, when $|x|\ge 2\diam(A)$, 
\[
\mathbb P\pars*{(\mathcal R_1+\dots+\mathcal R_k)\cap(x+A)\ne\emptyset}\asymp |x|^{2k-d}\Cap_{d-2k}(A),
\]
where Minkowski sum is defined by
\[
A+B:=\setof{a+b}{a\in A,b\in B}.
\]

In this paper, we prove this phenomenon for any random set with property similar to \eqref{eq:eg_cap}.

\begin{theorem}\label{thm:rw+rw}
Let $d,k\ge 1$, let $\alpha_1,\beta_1,\dots,\alpha_k,\beta_k\in(0,d)$ such that 
\[
\alpha_1+\dots+\alpha_k>(k-1)d,
\]
\[
\beta_1+\dots+\beta_k>(k-1)d.
\]
If $\mathcal R_1,\dots,\mathcal R_k$ are independent random sets in $\mathbb Z^d$ with the following property: for every finite set $A\subset\mathbb Z^d$ containing the origin and every $|x|\ge 2\diam(A)$,
we have
\[
{\mathbb P(\mathcal R_i\cap (x+A)\ne\emptyset)}
\asymp 
|x|^{-\beta_i}
\Cap_{\alpha_i}(A),\quad i=1,\dots,k.
\]
Then for every $|x|$ sufficiently large,
\[
{\mathbb P((\mathcal R_1+\dots+\mathcal R_k)\cap (x+A)\ne\emptyset)}
\asymp
|x|^{(k-1)d-\beta_1-\dots-\beta_k}\Cap_{\alpha_1+\dots+\alpha_k-(k-1)d}(A).
\]
Here $\asymp$ may rely on $d,k,(\alpha_{i}),(\beta_{i})$.
\end{theorem}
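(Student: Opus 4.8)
The plan is to prove the two bounds separately, and to build up from the case $k=2$ by induction, since the single-set hypothesis is exactly the $k=1$ case and the Minkowski sum is associative. So the heart of the matter is: given two independent random sets $\mathcal R, \mathcal R'$ with $\mathbb P(\mathcal R\cap(x+A)\ne\emptyset)\asymp |x|^{-\beta}\Cap_\alpha(A)$ and similarly with $(\alpha',\beta')$, and with $\alpha+\alpha'>d$, $\beta+\beta'>d$, show
\[
\mathbb P((\mathcal R+\mathcal R')\cap(x+A)\ne\emptyset)\asymp |x|^{d-\beta-\beta'}\Cap_{\alpha+\alpha'-d}(A).
\]
The key identity is that $(\mathcal R+\mathcal R')\cap(x+A)\ne\emptyset$ iff there exists $z\in\mathbb Z^d$ with $z\in\mathcal R$ and $(x+A-z)\cap\mathcal R'\ne\emptyset$, i.e. $\mathcal R'$ hits the translate $x-z+A$ while $\mathcal R$ hits $z$. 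I would condition on $\mathcal R$ and write, heuristically,
\[
\mathbb P((\mathcal R+\mathcal R')\cap(x+A)\ne\emptyset)\approx \mathbb E\Big[\,\min\Big(1,\ \sum_{z\in\mathcal R}|x-z|^{-\beta'}\Cap_{\alpha'}(A)\Big)\Big],
\]
which makes the whole estimate a question about the random measure $\sum_{z\in\mathcal R}\delta_z$ tested against the kernel $|x-\cdot|^{-\beta'}$; the first-moment/second-moment method for the inner hitting event (a second-moment lower bound on $\mathbb P(\mathcal R'\text{ hits }B)$ in terms of $\sum_{z\in B}\sum_{w\in B}|z-w|^{-\alpha'}$-type energies, which is the standard route to a capacity lower bound, cf. the derivation of \eqref{eq:eg_cap}) should let me replace "$\approx$" by "$\asymp$" up to the stated constants.

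For the \textbf{upper bound} I would use a union bound over which point of $\mathcal R$ lies closest to $x$ (in a dyadic-annulus sense): decompose $x+A$ according to the distance from $\mathcal R$, use the hypothesis $\mathbb P(\mathcal R\text{ hits }y+A')\lesssim |y|^{-\beta}\Cap_\alpha(A')$ with $A'$ a suitable ``fattening'' of $A$ to control $\mathcal R\cap(x+A-\mathcal R')$, and crucially the elementary estimate that for a finite set $B$ of diameter $\lesssim |x|$ lying near $x$, $\sum_{z\in B}|x-z|^{-\beta'}\asymp$ (number of points of $B$)$\cdot|x|^{-\beta'}$ up to boundary effects, combined with the convolution-of-Riesz-kernels bound
\[
\sum_{z}(|x_1-z|\vee1)^{-\alpha}(|z-x_2|\vee1)^{-\alpha'}\asymp (|x_1-x_2|\vee1)^{d-\alpha-\alpha'}
\]
valid precisely when $\alpha+\alpha'>d$ (this is where the arithmetic hypothesis $\alpha+\alpha'>d$, i.e.\ $\alpha_1+\dots+\alpha_k>(k-1)d$ after iteration, enters). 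Packaging these, the energy of the worst-case test measure on $A$ for the sum transforms from the $\Cap_\alpha,\Cap_{\alpha'}$ energies into the $\Cap_{\alpha+\alpha'-d}$ energy, with the $|x|$-power bookkeeping giving the exponent $d-\beta-\beta'$. The condition $\beta+\beta'>d$ is what guarantees this combined exponent is genuinely negative (hitting probability small) so that the truncation at $1$ is harmless for large $|x|$, and more importantly it is what makes the sum $\sum_{z\in x+A}|z|^{-\beta}|z-x|^{-\beta'}$ behave like $|x|^{d-\beta-\beta'}$ rather than being dominated by one endpoint.

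For the \textbf{lower bound} I would exhibit a good event: choose the near-optimal probability measure $\mu$ for $\Cap_{\alpha+\alpha'-d}(A)$, and show that with probability $\gtrsim |x|^{d-\beta-\beta'}\Cap_{\alpha+\alpha'-d}(A)$ there is a point $z\in\mathcal R$ with $|x-z|$ of order $|x|$ and simultaneously $\mathcal R'$ hits $x+A-z$; this is a second-moment computation for the count $N=\sum_{z}\1\{z\in\mathcal R\}\,\1\{\mathcal R'\cap(x+A-z)\ne\emptyset\}$ weighted appropriately, using independence of $\mathcal R,\mathcal R'$ to factor the first moment and using the hypothesis bounds (now the \emph{lower} bounds in $\asymp$) on both factors, plus a Paley–Zygmund argument. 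The main obstacle I anticipate is the second-moment (lower-bound) step: the hypothesis only gives one-point hitting estimates $\mathbb P(\mathcal R\text{ hits }x+A)$, not two-point estimates $\mathbb P(\mathcal R\text{ hits both }x+A\text{ and }x'+A)$, so controlling $\mathbb E[N^2]$ requires either deriving such a two-point bound from the one-point hypothesis via a last-exit or first-entrance decomposition (which is routine for random walk ranges but must be done abstractly here, presumably by a further application of the hypothesis with $A$ replaced by $A\cup(x'-x+A)$ together with the Riesz-convolution bound above), or restricting attention to a sub-collection of well-separated translates so that correlations are negligible. Getting the constants in this step to line up with the upper bound — so that the final answer is a genuine $\asymp$ — is the delicate part; everything else is Riesz-kernel bookkeeping and the dyadic union bound.
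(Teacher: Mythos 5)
There is a genuine gap, and it sits exactly where you flagged it: the second-moment step. Your lower bound requires $\mathbb E[N^2]$, hence two-point information about $\mathcal R$ of the form $\mathbb P(z,z'\in\mathcal R)$ or $\mathbb P(\mathcal R\text{ hits }x+A\text{ and }x'+A)$, and the hypothesis of the theorem simply does not provide it: it is a one-point hitting estimate, valid only up to multiplicative constants. Your proposed repairs do not close this. Applying the hypothesis to $A\cup(x'-x+A)$ only bounds the probability of hitting \emph{at least one} of the two translates, and you cannot recover the probability of hitting \emph{both} by inclusion--exclusion, because differencing three quantities each known only up to constant factors yields no two-sided bound. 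A last-exit/first-entrance decomposition needs a Markov (or at least renewal) structure that the theorem deliberately does not assume; and "well-separated translates" does not help against a set $\mathcal R$ with arbitrarily strong long-range positive correlations, which is perfectly compatible with the one-point hypothesis. The upper bound has a parallel problem: the union bound over $z\in\mathcal R$ of $\mathbb P(\mathcal R'\cap(x-z+A)\ne\emptyset)$, combined with the Riesz convolution $\sum_z|z|^{-\beta}|x-z|^{-\beta'}\asymp|x|^{d-\beta-\beta'}$, produces $|x|^{d-\beta-\beta'}\Cap_{\alpha'}(A)$, and since $\alpha'>\alpha+\alpha'-d$ this is in general strictly larger than the claimed $\Cap_{\alpha+\alpha'-d}(A)$. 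The step you describe as "packaging the energies" --- converting $\mathbb E[\Cap_{\alpha'}(A-\mathcal R)]$ into $\Cap_{\alpha+\alpha'-d}(A)$ with the right $|x|$-power --- is the actual mathematical content of the result, not bookkeeping, and it again seems inaccessible from one-point data alone if you work directly with the abstract sets.

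The paper avoids both issues by never performing a moment computation on the $\mathcal R_i$ themselves. Conditioning on the other sets, it uses the hypothesis (a one-point estimate, applied to the random target $-\mathcal R_2+\dots+(x+A)$ restricted to dyadic annuli) to trade each $\mathcal R_i$ for a fractal percolation $Q_d(2^{-\alpha_i};\cdot)$, via \eqref{eq:FP_cap}; all correlation structure is then needed only for fractal percolations, where Theorem \ref{thm:main} and Corollary \ref{cor:cap_a_b} (proved through the tree-percolation/Markov-chain machinery of Lyons--Peres--Schramm) collapse $Q_d(2^{-\alpha_1};\cdot)+Q_d(2^{-\alpha_2};\cdot)$ into a single $Q_d(2^{d-\alpha_1-\alpha_2};\cdot)$, and the sum is reduced from $k$ terms to $k-1$ iteratively; the annulus decomposition and the bookkeeping over scales $j_1\ge\dots\ge j_k$ is also where the restriction \eqref{eq:rA} on $|x|$ arises (see Section \ref{sec:3.3}). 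Incidentally, your plan to induct on $k$ would also need care even granting the $k=2$ case: the conclusion for $\mathcal R_1+\mathcal R_2$ holds only for $|x|$ large depending on $\diam(A)$ (when $\alpha_i\le\beta_i$), so the induction hypothesis is not literally reproduced. If you want to salvage your outline, the viable route is to import some surrogate for $\mathcal R$ with known correlations --- which is precisely the fractal-percolation replacement the paper makes.
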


\begin{remark}
Take $\alpha_i=\beta_i=d-2$, and $(\mathcal R_i)$ ranges of independent simple random walks, we deduce \cite[Theorem 1.6]{asselah2023branching} for large enough $|x|$.

Take $\alpha_i=d-4,\beta_i=d-2$, by Bai, Delmas and Hu \cite[Theorem 1.1]{bai2024branching}, we can also apply the theorem to ranges of independent critical branching random walks $(\mathcal R_i)$.
\end{remark}

\begin{remark}
To be more precise, we need $|x|$ large enough so that
\[
|x|\ge 2\diam(A)+\max_{1\le i\le k}\1_{\alpha_i\le\beta_i}\diam(A)^{\frac {d-\alpha_i}{d-\beta_i}}\log^{\frac{1}{d-\beta_i}}(\diam(A)+1),
\]
see Section \ref{sec:3.3}.
In particular, if $\alpha_i>\beta_i$, it is enough to have $|x|\ge 2\diam(A)$.
When the sets $(\mathcal R_i)$ are ranges of Markov processes (e.g. stable random walks), one may directly adapt the method of Section \ref{sec:markov} to remove this restriction on $|x|$.
\end{remark}

The main tool we use is fractal percolation, or Mandelbrot percolation: 
\begin{definition}\label{def:1}
Let $d\ge 1$, $p\in[0,1]$.
Consider the natural tiling of $[-\frac 1 2,\frac 1 2)^d$ by $2^d$ cubes congruent to $[0,\frac 1 2)^d$, we keep or discard these cubes independently with probability $p$, and let $Z_1$ be the collection of remaining cubes. 
Then we iterate the process, divide each cube in $Z_k$ into $2^d$ smaller ones congruent to $[0,2^{-k-1})^d$, and keep each of them with probability $p$ independently to obtain $Z_{k+1}$. 
Finally, we call
\[
Q_d(p;k):=\pars*{\bigcup_{Q\in Z_k}2^{k}Q}\cap\mathbb Z^d.
\]
the discrete fractal percolation, and 
\[
Q_d(p):={\bigcap_{n\ge 1}\bigcup_{Q\in Z_n}\overline Q}
\]
the (continuous) fractal percolation.

\begin{figure}[ht]
\centering
\includegraphics[height=4cm]{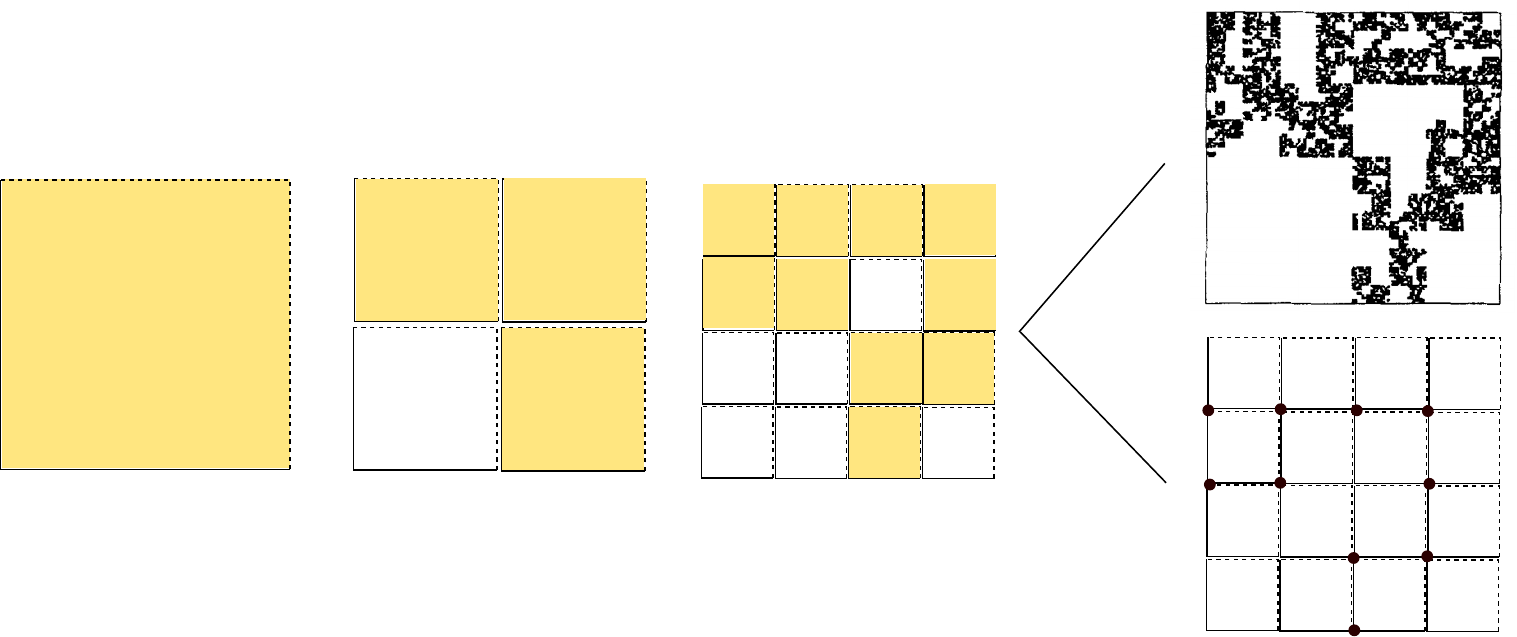}
\caption{Demonstration for construction of fractal percolation. The continuous version is limit of (union of closure of) $Z_k$, and the discrete version is collection of lattice points in the rescaled $2^kZ_k$.}
\end{figure}
\end{definition}

In particular, by Peres \cite[Corollary 4.3]{peres1996intersection}, long-range hitting probabilities of fractal percolation are characterized by capacity in a similar way as \eqref{eq:eg_cap}:
Let $d\ge 1$, $\beta>0$, then for every $k\ge 1$ and $A\subset[-2^{k-1},2^{k-1})^d\cap\mathbb Z^d$,
\begin{equation}\label{eq:FP_cap}
{\mathbb P(Q_d(2^{-\beta};k)\cap A\ne\emptyset)}
\asymp 2^{-\beta k}\Cap_\beta(A),
\end{equation}
where $\asymp$ only depends on $d,\beta$ (independent of $k,A$).

Our key result is the following estimate for sum of fractal percolations:
\begin{theorem}\label{thm:main}
Let $d\ge 1$. Let $p,q\in(0,1)$ such that
\begin{align}\label{eq:nonempty}
\beta:=-\log_2(2^dpq)>0,\quad p,q>2^{-d},
\end{align}
let $Q_d,\widehat Q_d$ be two independent fractal percolations, 
then for every $1\le m\le n$ and every nonempty set 
$A\subset[-2^{n-1},2^{n-1})^d\cap\mathbb Z^d$,
\begin{equation*}
(2^{d}p)^m\lesssim
\frac{\mathbb P((Q_d(p;m)+\widehat Q_d(q;n))\cap A\ne\emptyset)}{q^n\Cap_\beta(A)}
\lesssim (2^dp)^{m\vee\log_2\diam(A)}.
\end{equation*}
Here $\lesssim$ only depends on $d,p,q$ (independent of $m,n,A$).
\end{theorem}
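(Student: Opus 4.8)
\textit{(Plan.)} The plan is to prove the two bounds separately: the lower bound by a second-moment (Paley--Zygmund) argument, and the upper bound by conditioning on $Q_d(p;m)$ and estimating an averaged capacity. Write $p=2^{-a}$, $q=2^{-b}$, so that $a,b\in(0,d)$ and $\beta=a+b-d\in(0,d)$. Two elementary ingredients are used throughout. First, the two-point function of fractal percolation: for $u,u'$ in the box $B_k:=[-2^{k-1},2^{k-1})^d\cap\mathbb Z^d$,
\[
\mathbb P\bigl(u,u'\in Q_d(2^{-\gamma};k)\bigr)\asymp 2^{-\gamma k}(|u-u'|\vee 1)^{-\gamma},
\]
immediate from the tree construction (both points are kept iff all their ancestral cells are, and they share $\asymp k-\log_2(|u-u'|\vee1)$ such cells). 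Second, the convolution estimate $\sum_{t\in\mathbb Z^d}(|t|\vee1)^{-a}(|w-t|\vee1)^{-b}\asymp(|w|\vee1)^{-\beta}$, valid since $a,b\in(0,d)$ and $a+b>d$, together with the convergence of $\sum_{t}(|t|\vee1)^{-(a+b)}$.

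\textit{Lower bound.} Let $\mu$ be a probability measure on $A$ with $\sum_{z,z'\in A}\mu(z)\mu(z')(|z-z'|\vee1)^{-\beta}\le 2\,\Cap_\beta(A)^{-1}$ and full support on $A$ (up to a negligible perturbation), and set $W:=\sum_{z\in A}\mu(z)\,\1_{\{z\in Q_d(p;m)+\widehat Q_d(q;n)\}}$, so $\{W>0\}$ is exactly the event in question. For fixed $z$, run a further second moment on the number $M_z$ of pairs $(u,z-u)$ with $u\in B_m$, $z-u\in B_n$, $u\in Q_d(p;m)$, $z-u\in\widehat Q_d(q;n)$: since $\#\{u\in B_m:z-u\in B_n\}\asymp 2^{dm}$ one gets $\mathbb E[M_z]\asymp(2^dp)^m q^n$, and using the two-point functions and $\sum_t(|t|\vee1)^{-(a+b)}<\infty$ one gets $\mathbb E[M_z^2]\asymp(2^dp)^m q^n$; hence $\mathbb P(z\in Q_d(p;m)+\widehat Q_d(q;n))\ge \mathbb E[M_z]^2/\mathbb E[M_z^2]\gtrsim(2^dp)^m q^n$, so $\mathbb E[W]\gtrsim(2^dp)^m q^n$. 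For the second moment, a union bound over pairs of witnesses plus independence of the two percolations gives
\[
\mathbb P(z,z'\in Q_d(p;m)+\widehat Q_d(q;n))\le\sum_{u,u'}\mathbb P(u,u'\in Q_d(p;m))\,\mathbb P(z-u,z'-u'\in\widehat Q_d(q;n))\lesssim(2^dp)^m q^n(|z-z'|\vee1)^{-\beta},
\]
the last step by the substitution $t=u-u'$ and the convolution estimate; summing against $\mu\otimes\mu$ yields $\mathbb E[W^2]\lesssim(2^dp)^m q^n\Cap_\beta(A)^{-1}$. Paley--Zygmund then gives $\mathbb P(W>0)\ge\mathbb E[W]^2/\mathbb E[W^2]\gtrsim(2^dp)^m q^n\Cap_\beta(A)$.

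\textit{Upper bound.} Since $Q_d(p;m)\subseteq B_m$, conditionally on $Q_d(p;m)$ the event forces $\widehat Q_d(q;n)$ to meet $\bigl(A+(-Q_d(p;m))\bigr)\cap B_n$; applying \eqref{eq:FP_cap} to $\widehat Q_d(q;n)=Q_d(2^{-b};n)$ and using monotonicity of capacity,
\[
\mathbb P\bigl((Q_d(p;m)+\widehat Q_d(q;n))\cap A\ne\emptyset\bigr)\lesssim q^n\,\mathbb E\bigl[\Cap_b\bigl(A+(-Q_d(p;m))\bigr)\bigr],
\]
so it remains to show $\mathbb E[\Cap_b(A+(-Q_d(p;m)))]\lesssim(2^dp)^{m\vee\log_2\diam(A)}\Cap_\beta(A)$, which I would prove by induction on $m$. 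For $m=0$ the left side is $\Cap_b(A)$ (up to translation), and the claim reduces to the deterministic inequality $\Cap_b(A)\le(\diam(A)\vee1)^{b-\beta}\Cap_\beta(A)=(\diam(A)\vee1)^{d-a}\Cap_\beta(A)=(2^dp)^{\log_2(\diam(A)\vee1)}\Cap_\beta(A)$, obtained by inserting $(|z-z'|\vee1)^{-\beta}\le(\diam(A)\vee1)^{\beta-b}(|z-z'|\vee1)^{-b}$ into the $b$-energy of the $b$-equilibrium measure of $A$. For the step, peel off the first level of the construction of $Q_d(p;m)$: it is the union over its at most $2^d$ kept level-one cells $C$ of independent copies $w_C+Q_d^{(C)}(p;m-1)$ of the level-$(m-1)$ percolation, with $|w_C|\lesssim 2^m$. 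When $\diam(A)<2^{m-1}$ one uses subadditivity of capacity, takes expectations (cell survivals being independent of the sub-percolations), and applies the inductive hypothesis to each of the $2^d$ terms; this contributes exactly the factor $2^d\cdot p=2^dp=(2^dp)^{m\vee\log_2\diam A}/(2^dp)^{(m-1)\vee\log_2\diam A}$, so the induction closes with no loss. When $\diam(A)\ge 2^m$, the $2^d$ translated sets have diameter $\ge\diam(A)\ge 2^m$ while their offsets $w_C$ differ by only $\lesssim 2^m$, so they overlap heavily and naive subadditivity loses a constant per step; that loss must be recovered by bounding $\Cap_b$ of the union through the two-point function of $Q_d(p;m)$ rather than by crude subadditivity, and it is exactly this that replaces the exponent $m$ by $m\vee\log_2\diam(A)$.

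\textit{Main obstacle.} The hard part is precisely this last step: estimating $\mathbb E[\Cap_b(A+(-Q_d(p;m)))]$, i.e.\ the $b$-capacity of $A$ smeared along the fractal point cloud $Q_d(p;m)$. Subadditivity of capacity alone is too lossy because it ignores the clustering of that cloud — indeed the sharp value $\asymp(2^dp)^m\Cap_\beta(A)$ in the balanced regime $\diam(A)\le 2^m$ is, via \eqref{eq:FP_cap} applied to a third independent percolation at a level $\asymp m$, equivalent to the $m=n$ case of the theorem itself. The induction must therefore be arranged carefully: peel scales of $Q_d(p;m)$ only down to the scale $\min(2^m,\diam A)$, invoke the deterministic comparison $\Cap_b(A)\le(\diam(A)\vee1)^{d-a}\Cap_\beta(A)$ just at that bottom scale, and control overlaps between cells at every scale via the two-point function, so that the implied constant does not grow with $m$.
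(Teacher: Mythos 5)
Your lower bound is correct and is essentially the paper's own argument (a Paley--Zygmund second moment for a $\mu$-weighted count of witness pairs, with your convolution estimate playing the role of the paper's sum over $z\in\Delta_{m+1}$ in \eqref{eq:second_moment_L}). One small inaccuracy: the two-point function ``$\mathbb P(u,u'\in Q_d(2^{-\gamma};k))\asymp 2^{-\gamma k}(|u-u'|\vee 1)^{-\gamma}$'' is only an upper bound in general — two adjacent points separated by a high dyadic hyperplane have joint probability $2^{-2\gamma k}$ — but since you only use the $\lesssim$ direction (second moments) and exact one-point probabilities (first moments), nothing breaks.

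The upper bound, however, has a genuine gap, and it sits exactly where you flag the ``main obstacle''. Conditioning on $Q_d(p;m)$ and applying \eqref{eq:FP_cap} reduces the claim to $\mathbb E[\Cap_b(A-Q_d(p;m))]\lesssim(2^dp)^{m\vee\log_2\diam(A)}\Cap_\beta(A)$, which is precisely Corollary \ref{cor:cap_a_b} — a statement the paper \emph{deduces from} Theorem \ref{thm:main}, and which, as you note yourself via a third independent percolation, is equivalent to the hard case of the theorem; so the reduction makes no progress by itself. Your induction then does not close: each peeling step multiplies the bound by $2^dp$ and lowers $m$ by one, so everything rests on a base case at the scale $m_0\approx\log_2\diam(A)$, namely $\mathbb E[\Cap_b(A-Q_d(p;m_0))]\lesssim(2^dp)^{m_0}\Cap_\beta(A)$ when $\diam(A)\asymp 2^{m_0}$, and for this (and likewise for the regime $\diam(A)\ge 2^m$) you offer only the hope that overlaps can be ``controlled via the two-point function''. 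That is not an argument: an upper bound on $\mathbb E[\Cap_b]$ requires a lower bound on the energy of \emph{every} probability measure on the random set, which no first-moment or union-bound computation supplies. Continuing subadditivity down to scale $0$ gives exponent $m_0+\log_2\diam(A)\approx 2m_0$, not $m_0$; and the deterministic replacement of $Q_d(p;m_0)$ by the full box gives $\Cap_b(A+\Delta_{m_0})$, which for $A$ equal to two points at distance $2^{m_0}$ is of order $2^{bm_0}\gg(2^dp)^{m_0}=2^{(d-a)m_0}$ precisely because $\beta>0$. The paper resolves exactly this step with the reverse second-moment inequality for products of Markov chains (Theorem \ref{thm:salisbury}, Fitzsimmons--Salisbury / Salisbury / Lyons--Peres--Schramm), applied to the depth-first traversals of the two percolation trees (Lemma \ref{lem:up1}), followed by the tree-energy versus Riesz-energy comparison that produces the factor $(2^dp)^{m\vee\log_2\diam(A)}$ (Lemma \ref{lem:up2}); your proposal contains no substitute for this tool, so the upper bound is not established.
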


\begin{remark}
As has been observed in \cite{peres1996intersection}, the number of cubes in $(Q_d(p;k))_{k\ge 1}$ forms a branching process, with average offspring $2^dp$. 
When $2^dp>1$, this branching process is supercritical, so each $Q_d(p;k)$, as well as the limit $Q_d(p)$, is nonempty with a positive probability bounded uniformly from below. When $2^dp\le 1$, the branching process is subcritical or critical, and the nonempty probability of $Q_d(p;k)$ decays to $0$ as $k\rightarrow\infty$, making our hitting events trivial. Therefore we need $p,q>2^{-d}$ in \eqref{eq:nonempty}.  
See also Section \ref{sec:log_cap} for a brief discussion on the condition $\beta>0$.
\end{remark}

Except for proving Theorem \ref{thm:rw+rw}, Theorem \ref{thm:main} itself has some interesting consequences. 
Firstly, we may take $m=n$, and consider the continuous limit when $m,n\rightarrow\infty$. 
Here we abuse the notation $\Cap_\beta(\cdot)$ for Newtonian capacity of a compact set $\Lambda\subset\mathbb R^d$, 
\[
\Cap_\beta (\Lambda):=\pars*{\inf_{\mu}\int_{x,y\in \Lambda}|x-y|^{-\beta}\mu(\dif x)\mu(\dif y)}^{-1}.
\]
\begin{corollary}\label{cor:main_continuous}
Let $d\ge 1$, let $p,q\in(0,1)$ satisfy \eqref{eq:nonempty}. For every $n\ge 1$ and every $A\subset[-2^{n-1},2^{n-1})^d\cap\mathbb Z^d$,
\begin{equation}\label{eq:main_discrete+}
{\mathbb P((Q_d(p;n)+\widehat Q_d(q;n))\cap A\ne\emptyset)}
\asymp
{\mathbb P(Q_d(2^dpq;n)\cap A\ne\emptyset)}.
\end{equation}
Moreover, for every closed set $\Lambda\subset[-\frac12,\frac12]^d$,
\begin{align}\label{eq:main_continuous}
{\mathbb P((Q_d(p)+\widehat Q_d(q))\cap\Lambda\ne\emptyset)}
\asymp\Cap_\beta(\Lambda)
\asymp
{\mathbb P(Q_d(2^dpq)\cap\Lambda\ne\emptyset)}.
\end{align}
Here $\asymp$ only depends on $d,p,q$ (independent of $\Lambda$).
\end{corollary}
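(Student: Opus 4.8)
The plan is to read the discrete identity \eqref{eq:main_discrete+} directly off Theorem~\ref{thm:main}, and then promote it to \eqref{eq:main_continuous} by discretizing $\Lambda$ at dyadic scales and passing to the limit. For \eqref{eq:main_discrete+}: set $m=n$ in Theorem~\ref{thm:main}. Since $A\subset[-2^{n-1},2^{n-1})^d\cap\mathbb Z^d$ forces $\diam(A)<\sqrt d\,2^n$, the exponent $m\vee\log_2\diam(A)$ lies in $[n,\,n+\tfrac12\log_2 d)$, so (as $2^dp>1$) the upper and lower bounds of Theorem~\ref{thm:main} both equal $(2^dpq)^n\Cap_\beta(A)$ up to a constant depending only on $d,p$. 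Because $2^dpq=2^{-\beta}$ by \eqref{eq:nonempty}, \eqref{eq:FP_cap} gives $\mathbb P(Q_d(2^dpq;n)\cap A\ne\emptyset)\asymp 2^{-\beta n}\Cap_\beta(A)$ as well, and \eqref{eq:main_discrete+} follows.

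For the continuous statement, write $K_n:=\bigcup_{Q\in Z_n}\overline Q$ and likewise $\widehat K_n$, $K^\ast_n$ for the level-$n$ approximants of $Q_d(p)$, $\widehat Q_d(q)$, $Q_d(2^dpq)$; each is a decreasing sequence of nonempty-or-empty compact sets whose intersection is the corresponding fractal percolation. I will use two elementary facts: (i) Minkowski sum commutes with intersection of nested compacts — if $z\in\bigcap_n(K_n+\widehat K_n)$, write $z=x_n+y_n$ with $x_n\in K_n$; a subsequential limit $x$ of $(x_n)$ lies in every $K_m$ and $z-x$ in every $\widehat K_m$, so $Q_d(p)+\widehat Q_d(q)=\bigcap_n(K_n+\widehat K_n)$; and (ii) for a decreasing sequence of compacts $L_n$ and a compact $\Lambda$, one has $(\bigcap_n L_n)\cap\Lambda\ne\emptyset$ iff $L_n\cap\Lambda\ne\emptyset$ for every $n$ (finite intersection property). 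Hence the events $\{(K_n+\widehat K_n)\cap\Lambda\ne\emptyset\}$ and $\{K^\ast_n\cap\Lambda\ne\emptyset\}$ decrease to $\{(Q_d(p)+\widehat Q_d(q))\cap\Lambda\ne\emptyset\}$ and $\{Q_d(2^dpq)\cap\Lambda\ne\emptyset\}$, so it suffices to estimate the level-$n$ probabilities and let $n\to\infty$.

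Since $K_n=\bigcup_{v\in Q_d(p;n)}(2^{-n}v+[0,2^{-n}]^d)$ and similarly for $\widehat K_n,K^\ast_n$, one has the exact identities $\mathbb P((K_n+\widehat K_n)\cap\Lambda\ne\emptyset)=\mathbb P((Q_d(p;n)+\widehat Q_d(q;n))\cap A_n\ne\emptyset)$ and $\mathbb P(K^\ast_n\cap\Lambda\ne\emptyset)=\mathbb P(Q_d(2^dpq;n)\cap A^\ast_n\ne\emptyset)$, where $A_n:=\{u\in\mathbb Z^d:(2^{-n}u+[0,2^{1-n}]^d)\cap\Lambda\ne\emptyset\}$ and $A^\ast_n:=\{u:(2^{-n}u+[0,2^{-n}]^d)\cap\Lambda\ne\emptyset\}$ are scale-$2^{-n}$ discretizations of $\Lambda$ with $A^\ast_n\subset A_n\subset A^\ast_n+\{0,-1,-2\}^d$. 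Since $\Lambda\subset[-\tfrac12,\tfrac12]^d$, these sets lie within an additive $O(1)$ of the box $[-2^{n-1},2^{n-1})^d$; trimming them to the box changes $\Cap_\beta$ and the hitting probabilities by at most a bounded factor (a point falling outside is reachable only through an $O(1)$ translation back into the box, so the set is covered by boundedly many translates of its trimmed version, and $\Cap_\beta$ is monotone and subadditive) — alternatively one rescales $\Lambda$ into $[-\tfrac14,\tfrac14]^d$ once and for all using the statistical self-similarity of $Q_d$. Applying \eqref{eq:main_discrete+} and \eqref{eq:FP_cap} therefore yields, uniformly in $n$, $\mathbb P((K_n+\widehat K_n)\cap\Lambda\ne\emptyset)\asymp 2^{-\beta n}\Cap_\beta(A_n)\asymp 2^{-\beta n}\Cap_\beta(A^\ast_n)\asymp\mathbb P(K^\ast_n\cap\Lambda\ne\emptyset)$. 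It remains to identify $2^{-\beta n}\Cap_\beta(A^\ast_n)$ with $\Cap_\beta(\Lambda)$ up to constants: the inflated set $\Lambda_n:=2^{-n}(A^\ast_n+[0,1]^d)$ obeys $\Lambda\subset\Lambda_n\subset\Lambda^{(\sqrt d\,2^{-n})}$ (the closed $\sqrt d\,2^{-n}$-neighbourhood of $\Lambda$), its continuous Newtonian $\beta$-capacity equals $2^{-\beta n}\Cap_\beta(A^\ast_n+[0,1]^d)$, which is comparable to the discrete $\Cap_\beta(A^\ast_n)$ (standard comparison of discrete and continuous capacity for inflated lattice sets), and $\Cap_\beta(\Lambda^{(\varepsilon)})\downarrow\Cap_\beta(\Lambda)$ as $\varepsilon\downarrow 0$ by Choquet continuity of Newtonian capacity along decreasing compacts — valid since $\beta\in(0,d)$, which is forced by $p,q\in(2^{-d},1)$. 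Letting $n\to\infty$ in the two displays and squeezing gives \eqref{eq:main_continuous}, with the convention that $\asymp$ forces both probabilities to vanish when $\Cap_\beta(\Lambda)=0$.

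The genuinely delicate part is the last paragraph: keeping every comparison uniform in $n$ and in $\Lambda$ while shuttling between the discrete capacity of the dyadic discretization, the continuous capacity of its inflation, and $\Cap_\beta(\Lambda)$ itself, and disposing cleanly of the $O(1)$ boundary overshoot of $A_n,A^\ast_n$ relative to the box $[-2^{n-1},2^{n-1})^d$. The other ingredients — the reduction \eqref{eq:main_discrete+} from Theorem~\ref{thm:main}, the commutation of Minkowski sum with nested intersection, and the finite-intersection argument for the limit — are routine.
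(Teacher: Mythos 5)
Your proposal is correct and follows essentially the same route as the paper: deduce \eqref{eq:main_discrete+} from Theorem~\ref{thm:main} with $m=n$ (using $\diam(A)\lesssim 2^n$ to collapse the $m\vee\log_2\diam(A)$ exponent), then prove \eqref{eq:main_continuous} by discretizing $\Lambda$ at dyadic scales, identifying the limit event via compactness, and passing from discrete to continuous capacity. The only difference is presentational: where the paper proves the limit equivalence as a single ``iff'' and writes the discrete-to-continuous capacity passage as a chain of equalities, you split the topology into two explicit lemmas (Minkowski sum commutes with nested intersections; finite intersection property) and handle the discrete-versus-continuous capacity comparison more carefully via the inflated set $\Lambda_n$ and Choquet continuity along decreasing compacts — a refinement of rigor rather than a different argument.
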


Corollary \ref{cor:main_continuous} means the random set $Q_d(p)+\widehat Q_d(q)$ is comparable to $Q_d(2^dpq)$, and a phase transition takes place when $2^dpq=1$. This phenomenon has been noticed since Larsson \cite{larsson1991difference}, and has been studied in terms of fractal geometry when $d=1$ in Dekking and Simon \cite{dekking2008size}.

Secondly, we may fix $m$ and let $n\rightarrow\infty$ in Theorem \ref{thm:main} to obtain a comparison between two capacities,
\begin{corollary}\label{cor:cap_a_b}
Let $0<a<b<d$. 
For every nonempty finite set $A\subset\mathbb Z^d$ and every $m\ge 1$,
\begin{align*}
2^{(a-b)(m\vee\log_2\diam(A))}\lesssim
\frac{\mathtt{Cap}_{a}(A)}{\mathbb E[\mathtt{Cap}_b(A+Q_d(2^{b-a-d};m))]}
\lesssim 2^{(a-b)m}.
\end{align*}
For every nonempty closed set $\Lambda\subset[0,1]^d$,
\begin{align*}
\frac{\mathtt{Cap}_{a}(\Lambda)}{\mathbb E[\mathtt{Cap}_b(\Lambda+Q_d(2^{b-a-d}))]}\asymp 1.
\end{align*}
Here $\lesssim$ and $\asymp$ may rely on $d,a,b$.
\end{corollary}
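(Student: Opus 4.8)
The plan is to obtain Corollary \ref{cor:cap_a_b} directly from Theorem \ref{thm:main} and \eqref{eq:FP_cap} by conditioning on the first fractal percolation. Fix $0<a<b<d$ and put $p:=2^{b-a-d}$, $q:=2^{-b}$; then $p,q\in(2^{-d},1)$ and $\beta:=-\log_2(2^dpq)=a>0$, so the hypothesis \eqref{eq:nonempty} of Theorem \ref{thm:main} holds, while $2^dp=2^{b-a}$. Let $A\subset\mathbb Z^d$ be nonempty and finite; as every quantity below is translation invariant we may assume $0\in A$, hence $A\subset[-\diam(A),\diam(A)]^d$. Fix $m\ge1$. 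For $n$ sufficiently large (depending on $m$ and $A$) we have $A,\,A-Q_d(p;m)\subset[-2^{n-1},2^{n-1})^d$ for every realization of $Q_d(p;m)$, since $Q_d(p;m)\subset[-2^{m-1},2^{m-1})^d$. Conditioning on $Q_d(p;m)$ and using the independence of $\widehat Q_d(q;n)$,
\[
\mathbb P\bigl((Q_d(p;m)+\widehat Q_d(q;n))\cap A\ne\emptyset\bigr)=\mathbb E\Bigl[\mathbb P\bigl(\widehat Q_d(q;n)\cap(A-Q_d(p;m))\ne\emptyset\mid Q_d(p;m)\bigr)\Bigr];
\]
applying \eqref{eq:FP_cap} with parameter $b$ to the (after conditioning, deterministic) set $A-Q_d(p;m)$, with constants uniform in it, gives
\[
\mathbb P\bigl((Q_d(p;m)+\widehat Q_d(q;n))\cap A\ne\emptyset\bigr)\asymp q^n\,\mathbb E\bigl[\Cap_b(A-Q_d(p;m))\bigr].
\]

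Combining this with Theorem \ref{thm:main}, whose denominator $q^n\Cap_\beta(A)$ carries the same $q^n$, the factor $q^n$ cancels, and using $\beta=a$ we obtain
\[
(2^dp)^m\,\Cap_a(A)\ \lesssim\ \mathbb E\bigl[\Cap_b(A-Q_d(p;m))\bigr]\ \lesssim\ (2^dp)^{m\vee\log_2\diam(A)}\,\Cap_a(A).
\]
Applying this with $-A$ in place of $A$ and invoking $\diam(-A)=\diam(A)$, $\Cap_a(-A)=\Cap_a(A)$, the reflection invariance $\Cap_b(-A-Q_d(p;m))=\Cap_b(A+Q_d(p;m))$, and $2^dp=2^{b-a}$, yields
\[
2^{(b-a)m}\,\Cap_a(A)\ \lesssim\ \mathbb E\bigl[\Cap_b(A+Q_d(p;m))\bigr]\ \lesssim\ 2^{(b-a)(m\vee\log_2\diam(A))}\,\Cap_a(A),
\]
which is the first assertion of the corollary after taking reciprocals. (On $\{Q_d(p;m)=\emptyset\}$ both the event and the identity $\Cap_b(\emptyset)=0$ are consistent with this.)

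For the continuous statement one passes to the limit exactly as in the proof of Corollary \ref{cor:main_continuous}. Given nonempty closed $\Lambda\subset[0,1]^d$, let $A_n$ be the dilation by $2^n$ of a scale-$2^{-n}$ lattice approximation of $\Lambda$, and apply the bound just obtained with $m=n$ and $A=A_n$. Since $\diam(A_n)\asymp2^n\diam(\Lambda)$, we have $n\vee\log_2\diam(A_n)=n+O(1)$, so the two sides collapse to $\mathbb E[\Cap_b(A_n+Q_d(p;n))]\asymp2^{(b-a)n}\Cap_a(A_n)$. By the scaling of discrete Newtonian capacity $\Cap_a(A_n)\asymp2^{na}\Cap_a(\Lambda)$ and, through $2^{-n}(A_n+Q_d(p;n))\to\Lambda+Q_d(p)$ together with the corresponding convergence of rescaled discrete capacities, $\mathbb E[\Cap_b(A_n+Q_d(p;n))]\asymp2^{nb}\,\mathbb E[\Cap_b(\Lambda+Q_d(p))]$; the exponents match ($2^{(b-a)n}\cdot2^{na}=2^{nb}$), giving $\mathbb E[\Cap_b(\Lambda+Q_d(p))]\asymp\Cap_a(\Lambda)$.

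The discrete half is immediate once Theorem \ref{thm:main} is in hand, so the work concentrates in this last passage: showing that the discrete $\Cap_b$-energy of the dilated approximation $A_n$ is comparable to $2^{na}$ times the continuous $\Cap_a$-energy of $\Lambda$ (a $\Gamma$-convergence type statement, the discrete kernel $(|x-y|\vee1)^{-b}$ rescaling to $|x-y|^{-b}$), and justifying the interchange of $n\to\infty$ with the expectation over $Q_d(p;n)$ — for which one uses that all sets lie in a fixed bounded box, so the capacities are uniformly bounded, together with semicontinuity of capacity under the natural coupling $2^{-n}Q_d(p;n)\to Q_d(p)$.
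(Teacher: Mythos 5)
Your proposal is correct and follows essentially the same route as the paper: with $p=2^{b-a-d}$, $q=2^{-b}$ (so $\beta=a$), you rewrite the hitting probability of the sumset as $q^n\,\mathbb E[\Cap_b(A\pm Q_d(p;m))]$ via \eqref{eq:FP_cap} conditionally on $Q_d(p;m)$, and sandwich it with Theorem \ref{thm:main}, exactly as in the paper's proof (your reflection-of-$A$ trick is just the paper's ``symmetry in $Q$ and $-Q$''). The continuous statement is likewise handled by the lattice approximation $2^nT_n(\Lambda)$ and a passage to the limit, which is all the detail the paper itself supplies; your sketch of the limit interchange (decreasing compacts under the natural coupling plus bounded convergence) is the right justification.
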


Studies of fractal percolation date back to Mandelbrot \cite{mandelbrot1972renewal}.
In particular, its relation to capacity in terms of hitting probability was discovered in \cite{peres1996intersection}.
Minkowski sum of sets is equivalent to algebraic difference,
\[
A-B:=\{a-b,a\in A,b\in B\},
\]
and this structure for Cantor sets was first introduced in Palis \cite{palis58homoclinic}, and adapted to random cases in \cite{larsson1991difference}.
The sum of fractal percolations in dimension $d=1$ (Mandelbrot cascades) was considered in \cite{dekking2008size}.
More recently, sum of simple random walks was studied in \cite{asselah2023branching}. Notably, fractal percolation methods can also be used to give intersection probability estimates for random walks (Lawler \cite{lawler1982probability}), branching random walks (Baran \cite{baran2024intersections}), as well as in continuous regime for Brownian motions and Brownian sheets (Khoshnevisan and Shi \cite{MR1733143}).

The paper is outlined as follows. In Section \ref{sec:main} we prove Theorem \ref{thm:main} in a few steps: 
\begin{itemize}
\item We first introduce in Section \ref{sec:percolation} that,  fractal percolation is equivalent to percolation on a tree.
\item Then we show in Section \ref{sec:lower} that lower bound of  Theorem \ref{thm:main} follows by a standard second moment method.
\item In Section \ref{sec:markov}, we observe that sum of fractal percolation can be described using product of two Markov chains, and one can apply a method first introduced in Fitzsimmons and Salisbury \cite{fitzsimmons1989capacity} and Salisbury \cite{MR1395618}, later simplified in Lyons, Peres and Schramm \cite{lyons2003markov} to study product of Markov chains.
\item Finally we prove the upper bound of Theorem \ref{thm:main} in Section \ref{sec:upper}.
\end{itemize}
In Section \ref{sec:3.1}-\ref{sec:3.3}, we prove Corollary \ref{cor:main_continuous}, Corollary \ref{cor:cap_a_b} and Theorem \ref{thm:rw+rw} respectively, and we end the paper in Section \ref{sec:log_cap} by a brief discussion on sum of fractal percolations with parameters not covered by \eqref{eq:nonempty}.

\section{Proof of Theorem \ref{thm:main}}\label{sec:main}

For each $k\ge 1$, we abbreviate the centered discrete cube with side length $2^k$ by 
\[
\Delta_k:=[-2^{k-1},2^{k-1})^d\cap\mathbb Z^d.
\]
\subsection{Percolation on tree}\label{sec:percolation}
\begin{figure}[ht]
\centering
\includegraphics[height=5cm]{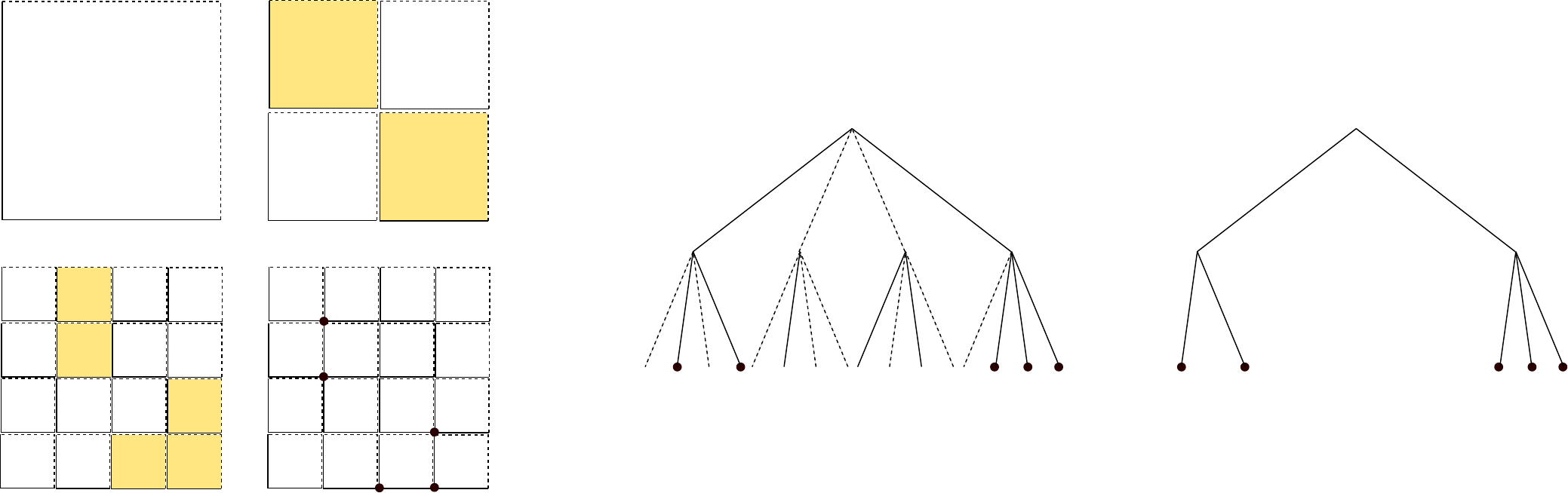}
\caption{An illustration of $Q_k(p)$ and its corresponding percolation on $\Gamma^{(k)}$.}
\label{fig:1}
\end{figure}

Our first observation is that (discrete) fractal percolation $Q_d(p;k)$ on $\Delta_k$ can be described by the Bernoulli bond percolation on a (finite) $2^d$-ary tree defined as follows, see Figure \ref{fig:1} for demonstration. 

\begin{definition}
Let $\Xi=\{0,1\}^d$ be our alphabet (heuristically, we use $\Xi$ to represent the $2^d$ small cubes when cutting one cube into half size). For $j\ge 1$, let $\Xi^j$ denote words $(\theta_1,\dots,\theta_j)$ with $\theta_1,\dots,\theta_j\in\Xi$. By convention, let $\Xi^0:=\{\emptyset\}$, let
\[
\mathbf 0_d=(0,\dots,0), \mathbf 1_d=(1,\dots,1)\in\Xi.
\]

Denote by $\Gamma^{(k)}$ a $2^d$-ary tree of height $k$ with vertices labelled by $\Xi^0\cup\dots\cup \Xi^k$: the tree is rooted at $\emptyset\in \Xi^0$, and for each $j=1,2,\dots,k$, generation $j$ of the tree is given by $\Xi^j$, with an edge connecting $(\theta_1,\dots,\theta_j)\in\Xi^j$ to its parent $(\theta_1,\dots,\theta_{j-1})$.
\end{definition}

For each particle $(\theta_1,\dots,\theta_j)\in\Xi^j$, let
\[
\mathscr R_k(\theta_1,\dots,\theta_j):=-2^{k-1}\mathbf 1_d+\sum_{i=1}^j 2^{k-i}\theta_i+\Delta_{k-j}\subset\mathbb Z^d.
\]
This gives a bijection between $\Xi^j$ and the natural tiling of $\Delta_k$ by blocks of size $\Delta_{k-j}$.

On $\Gamma^{(k)}$, we perform Bernoulli bond percolation with parameter $p$, that is, we independently keep each edge open with probability $p$, closed with probability $1-p$, and let
\[
O_j:=\setof{\theta=(\theta_1,\dots,\theta_j)}{\text{there is an open path between }\emptyset \text{ and }\theta}.
\]
Under a suitable coupling probability measure, $\mathscr R_k$ gives a bijection between $O_j$ and $2^kZ_j$, where we recall $Z_j$ from Definition \ref{def:1}.

In other words, we may take a coupling probability measure $\mathbb P$ under which 
\[
O_j=\mathscr R_k^{-1}(2^kZ_j),\,j=1,2,\dots,k.
\]
In particular, when $j=k$,
\begin{align}\label{eq:iff1}
\text{there is an open path between $\emptyset$ and $(\theta_1,\dots,\theta_k)$}
\iff
\mathscr R_k((\theta_1,\dots,\theta_k))\in Q_d(p;k).
\end{align}

As a direct application, we have the following estimates for fractal percolation:
\begin{lemma}\label{lem:xy_distance}
For any $x,y\in\Delta_k$,
\[
\mathbb P(x\in Q_d(p;k))=p^{k},
\]
\[
\mathbb P(x,y\in Q_d(p;k))\le p^{k+\log_2|x-y|-\frac 1 2\log_2 d}.
\]
\end{lemma}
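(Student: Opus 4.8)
The plan is to read off both estimates directly from the Bernoulli bond‑percolation description of $Q_d(p;k)$ on the tree $\Gamma^{(k)}$, via the equivalence \eqref{eq:iff1}. For the first identity, I would note that if $x\in\Delta_k$ corresponds to the leaf $\theta=\mathscr R_k^{-1}(x)\in\Xi^k$, then by \eqref{eq:iff1} the event $\{x\in Q_d(p;k)\}$ is precisely the event that the unique path in $\Gamma^{(k)}$ from the root $\emptyset$ to $\theta$ is open. This path has exactly $k$ edges, each open independently with probability $p$, so its probability is $p^k$.

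For the second inequality, let $u=\mathscr R_k^{-1}(x)$ and $v=\mathscr R_k^{-1}(y)$ be the corresponding leaves and let $w\in\Xi^j$ be their most recent common ancestor, at generation $0\le j\le k$ (with $j=k$ exactly when $x=y$, a case in which the stated bound is vacuous). Since $x,y\in Q_d(p;k)$ if and only if the $j$ edges along $\emptyset\to w$, the $k-j$ edges along $w\to u$, and the $k-j$ edges along $w\to v$ are all open, and since the two branches out of $w$ are edge‑disjoint, this is an event about $2k-j$ distinct edges; hence
\[
\mathbb P(x,y\in Q_d(p;k))=p^{2k-j}.
\]

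It then remains to relate $j$ to $|x-y|$. Both $x$ and $y$ lie in the block $\mathscr R_k(w)=c+\Delta_{k-j}$ for some $c\in\mathbb Z^d$, a translate of the discrete cube $[-2^{k-j-1},2^{k-j-1})^d\cap\mathbb Z^d$; two lattice points of this cube differ by at most $2^{k-j}-1$ in each coordinate, so $|x-y|<\sqrt d\,2^{k-j}$, i.e. $k-j>\log_2|x-y|-\tfrac12\log_2 d$ and therefore $2k-j>k+\log_2|x-y|-\tfrac12\log_2 d$. As $0<p<1$ (the cases $p\in\{0,1\}$ being immediate), this gives $p^{2k-j}\le p^{k+\log_2|x-y|-\frac12\log_2 d}$, as claimed. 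There is no genuine obstacle here; the only point needing care is the bookkeeping that translates the generation of the common ancestor into Euclidean distance, namely that a block at generation $j$ is a discrete cube of side $2^{k-j}$ and hence has diameter strictly below $\sqrt d\,2^{k-j}$.
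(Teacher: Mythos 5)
Your proof is correct and follows essentially the same route as the paper: identify $\{x\in Q_d(p;k)\}$ with the openness of the length-$k$ root-to-leaf path, compute the joint probability as $p^{k+h}$ via the most recent common ancestor (your $p^{2k-j}$ with $h=k-j$), and convert the generation of that ancestor into the bound $|x-y|\le\sqrt d\,2^{h}$ from the containing cube. No gaps; the bookkeeping matches the paper's argument exactly.
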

\begin{proof}
The first equation is clear, because $x\in Q_d(p;k)$ means all the $k$ edges on the path from $\emptyset$ to $\mathscr R_k^{-1}(x)$ are open, which happens with probability $p^k$.

For the second, denote by $\theta$ the common ancestor of $\mathscr R_k^{-1}(x)$ and $\mathscr R_k^{-1}(y)$, and by 
$h$ the length from $\theta$ to $\mathscr R_k^{-1}(x)$, then it is immediate that  
\[
\mathbb P(x,y\in Q_d(p;k))=p^{k+h}.
\]
Moreover, $\mathscr R_k(\theta)$ is congruent to $\Delta_h$, so the distance between $x,y\in \mathscr R_k(\theta)$ is bounded by
\begin{align}\label{eq:h_bound}
|x-y|\le \sqrt d 2^h,
\end{align}
as shown in Figure \ref{fig:xy}.
\begin{figure}[ht]
\centering
\includegraphics[height=5cm]{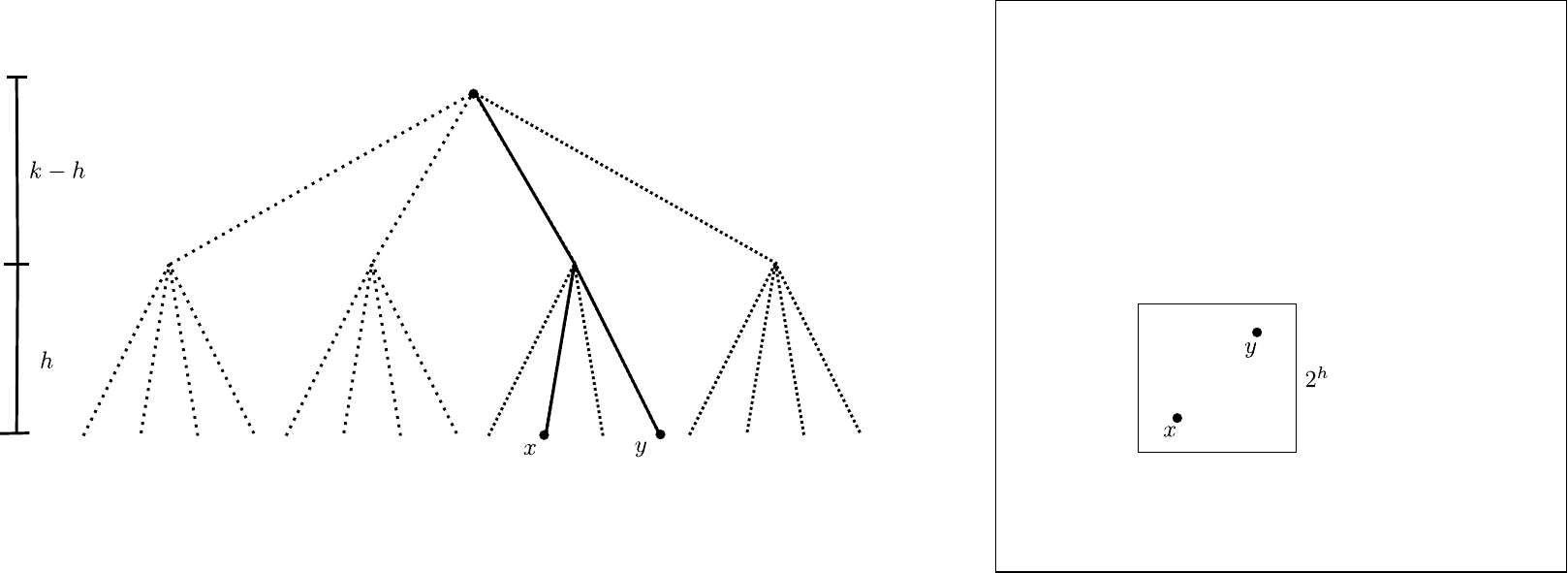}
\caption{Illustration for Lemma \ref{lem:xy_distance}. On the tree $\Gamma^{(k)}$, two points corresponding to $x,y$ has common ancestor of height $h$. In $\mathbb Z^d$, $x,y$ fall into a cube congruent to $\Delta_h$.}
\label{fig:xy}
\end{figure}
\end{proof}

\subsection{Lower bound of Theorem \ref{thm:main}}
\label{sec:lower}
We are now ready to give the lower bound of Theorem \ref{thm:main} by a standard second moment method. For later usage, we slightly extend the domain where the target set stays.

\begin{lemma}\label{lem:main_lower}
Let $d\ge 1$, $\lambda\in[0,1)$. Let $p,q\in(0,1)$ with property \eqref{eq:nonempty}.
For every $1\le m\le n$ and every finite set 
$A\subset[-2^{n-1}-\lambda2^{m-1},2^{n-1}+\lambda2^{m-1})^d\cap\mathbb Z^d$,
\begin{equation*}
\mathbb P((Q_d(p;m)+\widehat Q_d(q;n))\cap A\ne\emptyset)
\gtrsim (2^{d}p)^mq^n\Cap_\beta(A).
\end{equation*}
Here $\gtrsim$ relies on $d,\lambda,p,q$.
\end{lemma}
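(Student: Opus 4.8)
The plan is to run the second moment method on a natural random variable counting the ways the Minkowski sum hits $A$. Concretely, fix a probability measure $\mu$ on $A$ which is (close to) optimal for $\Cap_\beta(A)$, and define
\[
X:=\sum_{a\in A}\mu(a)\1\{a\in Q_d(p;m)+\widehat Q_d(q;n)\}
=\sum_{a\in A}\mu(a)\sum_{\substack{u\in\Delta_m,\,v\in\Delta_n\\ u+v=a}}\1\{u\in Q_d(p;m)\}\1\{v\in\widehat Q_d(q;n)\},
\]
where the inner sum over decompositions $a=u+v$ is the source of the combinatorial factor; the domain condition on $A$ guarantees that for each $a$ there is at least one admissible pair $(u,v)$ with $u\in\Delta_m$ and $v\in\Delta_n$ (this is precisely why the statement is phrased with the slightly enlarged box with the $\lambda 2^{m-1}$ slack, so that the method later applies to shifted sets). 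By Paley--Zygmund, $\mathbb P(X>0)\ge (\mathbb E X)^2/\mathbb E[X^2]$, so it suffices to lower bound $\mathbb E X$ and upper bound $\mathbb E[X^2]$.

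For the first moment, since $Q_d$ and $\widehat Q_d$ are independent, $\mathbb E[\1\{u\in Q_d(p;m)\}\1\{v\in\widehat Q_d(q;n)\}]=p^m q^n$ by the first identity in Lemma \ref{lem:xy_distance}. The number of decompositions $a=u+v$ with $u\in\Delta_m$ is of order $2^{dm}$ (every such $u$ forces $v=a-u$, and essentially all of the $2^{dm}$ choices of $u$ are admissible up to boundary effects controlled by $\lambda<1$), so $\mathbb E X\gtrsim (2^d p)^m q^n$. For the second moment I expand
\[
\mathbb E[X^2]=\sum_{a,a'\in A}\mu(a)\mu(a')\sum_{\substack{u+v=a\\ u'+v'=a'}}\mathbb P(u,u'\in Q_d(p;m))\,\mathbb P(v,v'\in\widehat Q_d(q;n)),
\]
and apply the two-point bound from Lemma \ref{lem:xy_distance}: $\mathbb P(u,u'\in Q_d(p;m))\le p^{m}\,p^{\log_2|u-u'|}\,\mathrm{const}$ and similarly for the $\widehat Q_d$ factor. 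Writing $p=2^{-\gamma}$, $q=2^{-\delta}$ with $\gamma+\delta=\beta+d$ (from \eqref{eq:nonempty}), the product of the two two-point factors contributes $2^{-(\gamma+\delta)m}\,2^{-\delta(n-m)}\cdot|u-u'|^{-\gamma}|v-v'|^{-\delta}$ up to constants. Summing over the decompositions: for fixed $a,a'$ set $w=u-u'$, so $v-v'=(a-a')-w$; the sum over admissible $w\in\Delta_{m}$-type range becomes $\sum_{w}|w|^{-\gamma}|(a-a')-w|^{-\delta}$, and a standard convolution estimate in $\mathbb Z^d$ with exponents summing to $\gamma+\delta=\beta+d>d$ yields this is $\lesssim |a-a'|^{d-\gamma-\delta}\cdot 2^{d m}$-type, i.e. comparable to $2^{dm}\cdot|a-a'|^{-\beta}$ (the extra $2^{dm}$ being the count of $u$'s, as in the first moment). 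Collecting, $\mathbb E[X^2]\lesssim (2^d p)^{2m} q^{2n}\sum_{a,a'}\mu(a)\mu(a')|a-a'|^{-\beta}=(2^d p)^{2m}q^{2n}\,\mathcal E_\beta(\mu)$, where $\mathcal E_\beta(\mu)$ is the $\beta$-energy of $\mu$.

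Putting the two bounds together, $\mathbb P(X>0)\gtrsim (\mathbb E X)^2/\mathbb E[X^2]\gtrsim (2^d p)^{2m}q^{2n}/\big((2^d p)^{2m}q^{2n}\mathcal E_\beta(\mu)\big)=\mathcal E_\beta(\mu)^{-1}$, and choosing $\mu$ to nearly minimize the energy makes $\mathcal E_\beta(\mu)^{-1}\asymp\Cap_\beta(A)$. Since the event $\{X>0\}$ is exactly $\{(Q_d(p;m)+\widehat Q_d(q;n))\cap A\ne\emptyset\}$, this is the claimed bound; the dependence of the implied constant on $d,\lambda,p,q$ is transparent from the above. The main technical obstacle is the convolution estimate for $\sum_{w}|w|^{-\gamma}|c-w|^{-\delta}$ over the correct finite range: one must check that, given only $\gamma,\delta\in(0,d)$ with $\gamma+\delta>d$ (but possibly each individually less than $d$), the sum is genuinely of order $|c|^{d-\gamma-\delta}$ uniformly, handling separately the regions near $0$, near $c$, and far from both, and making sure the truncation of the sum to $u\in\Delta_m$ (rather than all of $\mathbb Z^d$) does not cost more than a constant — the enlarged domain hypothesis with $\lambda<1$ is what keeps these boundary contributions under control. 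A secondary point is bookkeeping the count $2^{dm}$ of admissible decompositions consistently in both moments so the powers of $2^{dm}$ cancel correctly, leaving exactly $(2^d p)^m q^n\Cap_\beta(A)$ and no stray factor of $2^{dm}$.
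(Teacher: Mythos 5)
Your plan is the same as the paper's: Paley--Zygmund applied to a weighted count of decompositions $a=u+v$ with $u\in Q_d(p;m)$, $v\in\widehat Q_d(q;n)$ (the paper's $L_\mu$), first moment via the $\asymp 2^{dm}$ admissible decompositions (this is exactly where $\lambda<1$ enters), second moment via the two-point bound of Lemma \ref{lem:xy_distance}, a change of variable $w=u-u'$, and the convolution estimate $\sum_w|w|^{-\gamma}|c-w|^{-\delta}\lesssim |c|^{-\beta}$ valid since $\gamma,\delta\in(0,d)$ and $\gamma+\delta=\beta+d>d$, which is precisely \eqref{eq:nonempty}; the truncation to $w\in\Delta_{m+1}$ only helps for an upper bound (the paper even keeps it to get the sharper factor $(2^dp)^{m\wedge\log_2|a-b|}q^{\log_2|a-b|}$). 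Two cosmetic points: the first display is not an equality (the right-hand side counts decompositions with multiplicity), but only $\{X>0\}\subset\{(Q_d(p;m)+\widehat Q_d(q;n))\cap A\ne\emptyset\}$ is needed; and the diagonal $a=a'$ should be handled with the $|x-y|\vee 1$ convention, the $w$-sum there being finite because $\gamma+\delta>d$.

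There is, however, a concrete bookkeeping error in the collection step, and as written your conclusion is not the lemma. Your own intermediate computation gives, per pair $(a,a')$, the factor $p^mq^n\cdot 2^{dm}\cdot|a-a'|^{-\beta}$: the $2^{dm}$ appears \emph{once} (from summing over the location of the pair at fixed difference $w$), while the sum over $w$ is absorbed into $|a-a'|^{-\beta}$ and does not produce a second factor $2^{dm}$. Hence $\mathbb E[X^2]\lesssim 2^{dm}p^mq^n\,\mathcal E_\beta(\mu)=(2^dp)^mq^n\,\mathcal E_\beta(\mu)$, not $(2^dp)^{2m}q^{2n}\,\mathcal E_\beta(\mu)$ as you state. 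With the corrected bound, Paley--Zygmund yields
\[
\mathbb P(X>0)\;\ge\;\frac{\pars*{(2^dp)^mq^n}^2}{(2^dp)^mq^n\,\mathcal E_\beta(\mu)}\;=\;(2^dp)^mq^n\,\mathcal E_\beta(\mu)^{-1},
\]
which, optimizing over $\mu$, is exactly the lemma. Your final line instead claims $\mathbb P(X>0)\gtrsim\mathcal E_\beta(\mu)^{-1}\asymp\Cap_\beta(A)$, which is a strictly stronger statement and is false in general: for $A=\{0\}$ and $m=n$ the hitting probability is at most $2^{dn}p^nq^n=2^{-\beta n}\to 0$, while $\Cap_\beta(\{0\})=1$. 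This is a fixable slip rather than a conceptual gap (your last paragraph even announces the correct target $(2^dp)^mq^n\Cap_\beta(A)$), but the displayed arithmetic must be repaired for the proof to establish the stated bound.
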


\begin{proof}
By definition of $\Cap_\beta$ in \eqref{eq:def_cap}, 
it suffices to show that for any probability measure $\mu$ supported on $A$,
\begin{align}
\mathbb P((Q_d(p;m)+\widehat Q_d(q;n))\cap A\ne\emptyset)
\gtrsim (2^{d}p)^mq^n\pars*{\sum_{a,b\in A}\mu(a)\mu(b)|a-b|^{-\beta}}^{-1}.
\label{eq:goal_energy}
\end{align}

Arbitrarily fix $\mu$, let 
\[
L_\mu:=\sum_{a\in A}\mu(a)\sum_{x\in\mathbb Z^d}\1_{x\in Q_d(p;m),a-x\in \widehat Q_d(q;n)}.
\]
By Paley-Zygmund inequality,
\begin{equation}\label{eq:PZ}
\mathbb P((Q_d(p;m)+\widehat Q_d(q;n))\cap A\ne\emptyset)
\ge\mathbb P(L_\mu>0)\ge\frac{(\mathbb E[L_\mu])^2}{\mathbb E[L_\mu^2]}.
\end{equation}

Observe that uniformly in $a\in A$, (here $\asymp$ depends on $\lambda$, in particular it requires the condition $\lambda<1$)
\begin{equation}\label{eq:avoid_boundary}
\#\setof*{x\in\mathbb Z^d}{x\in\Delta_m, 
a-x\in\Delta_n}\asymp 2^{dm}.
\end{equation}
Therefore by Lemma \ref{lem:xy_distance},
\begin{align}\label{eq:first_moment}
\mathbb E[L_\mu]=\sum_{a\in A}\mu(a)\sum_{x\in\Delta_m}\mathbb P(x\in Q_d(p;m))\mathbb P(a-x\in\widehat Q_d(q;n))\gtrsim 2^{dm}p^{m}q^n.
\end{align}

Moreover, apply Lemma \ref{lem:xy_distance} again, we have
\begin{align*}
\mathbb E[L_\mu^2]=&\sum_{a,b\in A}\mu(a)\mu(b)\sum_{x,y\in\mathbb Z^d}
\mathbb P(x,y\in Q_d(p;m))\mathbb P(a-x,b-y\in \widehat Q_d(q;n))
\\
\lesssim&
\sum_{a,b\in A}\mu(a)\mu(b)\sum_{x,y\in\mathbb Z^d}
p^{m+\log_2|x-y|}q^{n+\log_2|a-x+y-b|}\1_{x,y\in\Delta_m}\1_{a-x,b-y\in\Delta_n}
\\
\lesssim&
2^{dm}p^mq^n\sum_{a,b\in A}\mu(a)\mu(b)\sum_{z\in\Delta_{m+1}}
p^{\log_2|z|}q^{\log_2|a-b-z|},
\end{align*}
where in the last step we change variable $z=x-y\in\Delta_{m+1}$.

By \eqref{eq:nonempty}, we can expand the sum on $z$ and continue the calculation above,
\begin{equation}\label{eq:second_moment_L}
\begin{aligned}
\mathbb E[L_\mu^2]\lesssim&
2^{dm}p^mq^n\sum_{a,b\in A}\mu(a)\mu(b)\sum_{z\in\Delta_{m+1}}
p^{\log_2|z|}q^{\log_2|a-b-z|}\\
\lesssim&
2^{dm}p^mq^n\sum_{a,b\in A}\mu(a)\mu(b)
(2^dp)^{(m\wedge\log_2|a-b|)}q^{\log_2|a-b|}\1_{a\ne b}
\\
\lesssim&
2^{dm}p^mq^n\sum_{a,b\in A}\mu(a)\mu(b)|a-b|^{-\beta}\1_{a\ne b}.
\end{aligned}
\end{equation}

Combine \eqref{eq:PZ}, \eqref{eq:first_moment} and \eqref{eq:second_moment_L}, and the conclusion follows.
\end{proof}

\subsection{Relation with Markov chain}\label{sec:markov}
The percolation on $\Gamma^{(k)}$ in Section \ref{sec:percolation} can be identified by the following sequence $(Y_i^{(k)})_{i=1,2,\dots,2^{dk}}$. 
\begin{definition}\label{def:tree_to_Markov}
Fix a percolation on $\Gamma^{(k)}$. We list all the $2^{dk}$ elements of $\Xi^k\subset\Gamma^{(k)}$ of the tree in lexicographical order,
\begin{align}\label{eq:label_vertices}
u_1=(\mathbf 0_d,\dots,\mathbf 0_d),u_2,\dots,u_{2^{dk}}=(\mathbf 1_d,\dots,\mathbf 1_d)\in\Xi^k.
\end{align}

For each $1\le i\le 2^{dk}$, the path from $\emptyset$ to $u_i$ on the tree has length $k$, and we define $Y^{(k)}_i=(y_{i1},\dots,y_{ik})\in\{0,1\}^k$, where
\[
y_{ij}:=\1_{\braces{\text{The $j$-th step in the path from $\emptyset$ to $u_i$ is open}}}.
\]
\end{definition}
In particular, 
\begin{align}\label{eq:iff2}
Y^{(k)}_i=(1,1,\dots,1) \iff
\text{There is an open path between $\emptyset$ and $u_i$}.
\end{align}
\begin{figure}[ht]
\centering
\includegraphics[height=5cm]{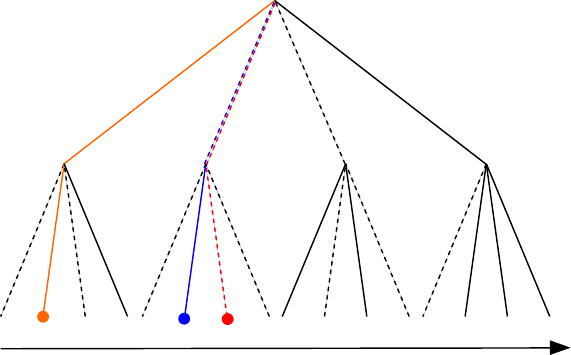}
\caption{In this picture we list $(u_i)$ for $\Gamma^{(2)}$ from left to right. Here the colored paths correspond to $Y^{(2)}_2=(1,1), Y^{(2)}_6=(0,1), Y^{(2)}_{7}=(0,0)$.}
\end{figure}

By construction, it is immediate that
\begin{lemma}\label{lem:chain}
Let $\mathbb P$ be the law under which we have Bernoulli bound percolation on $\Gamma^{(k)}$ with parameter $p$.
Under $\mathbb P$, $(Y_i^{(k)})_i$ is a Markov chain.
\end{lemma}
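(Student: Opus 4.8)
The plan is to verify the Markov property of $(Y_i^{(k)})_i$ directly from the tree structure, using the fact that consecutive words $u_i$ and $u_{i+1}$ in lexicographic order share a long common prefix, and that the bond percolation on disjoint subtrees is independent.

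First I would describe precisely how $u_i$ and $u_{i+1}$ are related. If $u_i = (\theta_1,\dots,\theta_k)$, let $r = r(i)$ be the largest index such that $\theta_r \ne \mathbf 1_d$; then $u_{i+1}$ agrees with $u_i$ in the first $r-1$ coordinates, has its $r$-th coordinate equal to the lexicographic successor of $\theta_r$ in $\Xi$, and has all remaining coordinates equal to $\mathbf 0_d$. Consequently, on the tree $\Gamma^{(k)}$, the path from $\emptyset$ to $u_i$ and the path from $\emptyset$ to $u_{i+1}$ share exactly their first $r-1$ edges, and then diverge. This means the coordinates $y_{i1},\dots,y_{i,r-1}$ of $Y^{(k)}_i$ coincide with $y_{i+1,1},\dots,y_{i+1,r-1}$ of $Y^{(k)}_{i+1}$ (they record the same edges), while the remaining edges on the path to $u_{i+1}$ are fresh — they have not been examined by any of $Y^{(k)}_1,\dots,Y^{(k)}_i$, because every such earlier path either stays within the first $r-1$ shared edges or lies in the already-exhausted subtrees rooted at the earlier children.

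The key step is then to check that, conditionally on $Y^{(k)}_1,\dots,Y^{(k)}_i$, the law of $Y^{(k)}_{i+1}$ depends only on $Y^{(k)}_i$. Indeed, the shared prefix $y_{i+1,1},\dots,y_{i+1,r-1}$ is a deterministic function of $Y^{(k)}_i$ (it equals the corresponding prefix of $Y^{(k)}_i$, and the cut point $r$ is itself read off from the positions of the first non-$\mathbf 1_d$ coordinate of $u_i$, hence is deterministic in $i$). If the first $r-1$ edges of the path to $u_i$ are not all open, then neither are the first $r-1$ edges of the path to $u_{i+1}$, so $Y^{(k)}_{i+1}$ has a prefix of zeros of known length determined by $Y^{(k)}_i$; in every case, the remaining coordinates of $Y^{(k)}_{i+1}$ correspond to edges that are independent of $\mathcal F_i := \sigma(Y^{(k)}_1,\dots,Y^{(k)}_i)$ (these edges have never appeared in paths to $u_1,\dots,u_i$), so they are i.i.d.\ Bernoulli$(p)$ given $\mathcal F_i$. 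Hence the conditional law of $Y^{(k)}_{i+1}$ given $\mathcal F_i$ is a function of $Y^{(k)}_i$ alone, which is exactly the Markov property.

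I expect the main obstacle to be bookkeeping rather than anything deep: one must argue carefully that the "new" edges on the path to $u_{i+1}$ — those after the divergence point — are genuinely disjoint from \emph{all} edges examined by $Y^{(k)}_1,\dots,Y^{(k)}_i$, not merely from those examined by $Y^{(k)}_i$. This follows because any $u_j$ with $j \le i$ is lexicographically $\le u_i$, so its path either coincides with the shared prefix or branches off at an edge strictly preceding the $u_i$–$u_{i+1}$ divergence point into a subtree that does not contain $u_{i+1}$; an induction on the tree structure (or a direct comparison of lexicographic orders) makes this precise. Once this disjointness is established, independence of bond percolation on disjoint edge sets gives the conclusion immediately, so I would state that observation as the crux and keep the rest brief.
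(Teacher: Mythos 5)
Your argument is correct: the lexicographic-successor structure, the identification of the shared prefix, and the observation that the post-divergence edges are disjoint from every edge examined by $Y^{(k)}_1,\dots,Y^{(k)}_i$ (hence independent of $\sigma(Y^{(k)}_1,\dots,Y^{(k)}_i)$ under Bernoulli bond percolation) is exactly what is needed. The paper offers no proof at all, stating the lemma as immediate by construction, so your write-up is simply a careful filling-in of that same construction-based reasoning rather than a different route.
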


Finally, we cite the following theorem on general Markov chains from \cite{lyons2003markov}, which is a simplified version of the method first introduced in \cite{fitzsimmons1989capacity},see also \cite[Theorem 2]{MR1395618}, and shall provide an important bound on hitting probability estimates of fractal percolation.
\begin{theorem}\label{thm:salisbury}
Let $(X_n)_{n\in\mathbb N}$, $(Y_n)_{n\in\mathbb N}$ be two independent Markov chains with the same state space $\mathcal S$. 
For any set 
\[
K\subset
\setof{(i,j;x,y)}{i,j\in\mathbb N,x,y\in\mathcal S},
\] 
if
\[
\mathbb P\pars*{\exists i,j,
(i,j;X_i,Y_j)\in K
}>0,
\]
then there are explicit nonnegative coefficients $(a_{ijxy})$ such that 
\[
\frac{\mathbb E[S_K]^2}{\mathbb E[S_K^2]}
\le
\mathbb P\pars*{\exists i,j,
(i,j;X_i,Y_j)\in K
}
\le
64\frac{\mathbb E[S_K]^2}{\mathbb E[S_K^2]},
\]
where 
\[
S_K:=
\sum_{(i,j;x,y)\in K}
a_{ijxy}\1_{X_i=x,Y_j=y},
\]
\end{theorem}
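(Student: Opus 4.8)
The plan is to follow the by-now-classical two-chains argument of \cite{fitzsimmons1989capacity,MR1395618} in the streamlined form of \cite{lyons2003markov}. The lower bound is immediate from the Paley–Zygmund (second moment) inequality: for any nonnegative random variable $S_K$ supported on the event $E:=\{\exists i,j,\ (i,j;X_i,Y_j)\in K\}$, we have $\mathbb P(E)\ge \mathbb P(S_K>0)\ge \mathbb E[S_K]^2/\mathbb E[S_K^2]$, and this holds for \emph{any} choice of coefficients $(a_{ijxy})\ge 0$, so nothing needs to be optimized here. All the work is in the upper bound, and the whole point of the theorem is that one can choose the weights $a_{ijxy}$ cleverly so that the reverse inequality holds up to the universal constant $64$.

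The key idea for the upper bound is a \emph{first-hit decomposition combined with a quasi-independence (Markov) estimate}. First I would reduce to the case where $K$ is finite (and then pass to the limit), so that $E$ is a finite union of events and $\mathbb P(E)$ is attained. On $E$, let $(\sigma,\tau)$ be the lexicographically-first pair of indices $(i,j)$ with $(i,j;X_i,Y_j)\in K$; conditioning on $\{(\sigma,\tau)=(i,j),\ X_i=x,\ Y_j=y\}$ and using the Markov property of both chains \emph{forward} from times $i$ and $j$, one can bound $\mathbb E[S_K \mid E]$ from below by a sum, over future indices $(i',j')$ and future states $(x',y')$, of $a_{i'j'x'y'}$ times transition probabilities $\mathbb P(X_{i'}=x'\mid X_i=x)\,\mathbb P(Y_{j'}=y'\mid Y_j=y)$. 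This suggests the canonical choice
\[
a_{ijxy} := \Big(\sum_{(i',j';x',y')\in K,\ i'\ge i,\ j'\ge j} \mathbb P(X_{i'}=x'\mid X_i=x)\,\mathbb P(Y_{j'}=y'\mid Y_j=y)\Big)^{-1},
\]
with the convention that the term vanishes when the transition is impossible; with this weighting the conditional first moment $\mathbb E[S_K\mid (\sigma,\tau),X_\sigma,Y_\tau]$ is bounded below by a constant, giving $\mathbb E[S_K]\gtrsim \mathbb P(E)$. Then one controls the second moment $\mathbb E[S_K^2]=\sum a_{ijxy}a_{i'j'x'y'}\mathbb P(X_i=x,X_{i'}=x')\mathbb P(Y_j=y,Y_{j'}=y')$ by splitting the double sum according to which of the two space–time points comes first in each coordinate, applying the Markov property to factor the pair-probabilities, and recognizing the resulting inner sum as exactly the reciprocal of one of the $a$'s; this telescoping is what produces $\mathbb E[S_K^2]\lesssim \mathbb E[S_K]$, hence $\mathbb P(E)\ge \mathbb E[S_K]^2/\mathbb E[S_K^2]\gtrsim \mathbb E[S_K]\gtrsim \mathbb P(E)$, and a careful bookkeeping of the constants (the four orderings in the second moment, each paired with a factor from the first-moment bound) yields the explicit $64$.

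I expect the main obstacle to be the bookkeeping in the second-moment estimate: one must handle the four cases coming from the relative order of $(i,i')$ and of $(j,j')$ separately, and in the "crossing" cases ($i\le i'$ but $j\ge j'$) the naive factorization does not close, so one has to symmetrize and use that $a_{ijxy}\le a_{i'j'x'y'}$-type monotonicity along with the defining identity for $a$. Keeping the universal constant at exactly $64$ (rather than some larger absolute constant) requires being slightly careful about whether sums are over $i'\ge i$ or $i'> i$ and about double-counting the diagonal, but none of this is deep. Since this theorem is quoted verbatim from \cite{lyons2003markov}, I would in fact simply cite it and refer the reader there for the explicit coefficients and the proof of the two inequalities, indicating only the Paley–Zygmund lower bound and the first-hit/Markov structure of the upper bound as above.
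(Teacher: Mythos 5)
Your fallback position (simply cite \cite{lyons2003markov}) coincides with what the paper actually does: its ``proof'' is the citation to Theorem~3.1 there, together with a record of the explicit weights, namely
\[
a_{ijxy}=\mathbb P\bigl(\tau=i,\ \lambda=j \,\big|\, X_i=x,\ Y_j=y\bigr),
\qquad
\tau:=\min\{i:\exists j,\ (i,j;X_i,Y_j)\in K\},
\quad
\lambda:=\min\{j:(\tau,j;X_\tau,Y_j)\in K\},
\]
i.e.\ the conditional distribution of the (lexicographically) first hitting pair. Your Paley--Zygmund lower bound is also exactly right and indeed holds for any nonnegative weights.

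However, the self-contained argument you sketch is not the argument that works, and the place where it breaks is precisely the point you wave away. You propose the reciprocal-of-Green's-function weights $a_{ijxy}=\bigl(\sum_{i'\ge i,\ j'\ge j}\mathbb P(X_{i'}=x'\mid X_i=x)\,\mathbb P(Y_{j'}=y'\mid Y_j=y)\bigr)^{-1}$ and then claim $\mathbb E[S_K^2]\lesssim\mathbb E[S_K]$ by splitting the second moment into the four order-quadrants. In the crossing quadrant ($i\le i'$ but $j\ge j'$) the Markov factorization produces a ``mixed'' kernel $\mathbb P(X_{i'}=x'\mid X_i=x)\,\mathbb P(Y_j=y\mid Y_{j'}=y')$, which is not the reciprocal of either point's weight, so no telescoping identity is available; ``symmetrize and use $a_{ijxy}\le a_{i'j'x'y'}$-type monotonicity'' is not a proof, and no such monotonicity holds in general. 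This crossing obstruction is exactly why the two-parameter problem is hard (it is the content of Fitzsimmons--Salisbury) and exactly what the first-hit weights above are designed to bypass: with them $\mathbb E[S_K]=\mathbb P(\exists i,j:(i,j;X_i,Y_j)\in K)$ exactly, and in the second moment the inner sums of weights against forward transition kernels are bounded by probabilities of events (hence by $1$), with the definition of $\lambda$ relative to $\tau$ handling the mixed orderings and the case analysis producing the constant $64$. So either adopt the paper's explicit weights and redo the second-moment bookkeeping along those lines, or (as you and the paper both ultimately do) cite \cite{lyons2003markov}; as written, your alternative weighting leaves a genuine gap.
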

\begin{proof}
The theorem is \cite[Theorem 3.1]{lyons2003markov}, and the explicit construction for $S_K$ is given in \cite[Page 7]{lyons2003markov} by
\[
a_{ijxy}=\mathbb P\parsof{\tau=i,\lambda=j}{X_i=x,Y_j=y},
\]
where 
\[
\tau:=\min\setof{i}{\exists j,(i,j;X_i,Y_j)\in K},
\]
\[
\lambda:=\min\setof{j}{(\tau,j;X_\tau,Y_j)\in K}.
\]
\end{proof}

\subsection{Upper bound of Theorem \ref{thm:main}}
\label{sec:upper}
For simplicity, we shall denote
\[
i\oplus j:=\mathscr R_m(u_i)+\mathscr R_n(\widehat u_j)\in\mathbb Z^d
\]
for every $1\le i\le 2^{dm}, 1\le j\le 2^{dn}$, where $(u_i)$ and $(\widehat u_j)$ enumerate the elemements in the last generation of $\Gamma^{(m)}$ and
$\Gamma^{(n)}$, as in Definition \ref{def:tree_to_Markov}. 
We also denote by $h^m_{ii'}$ (similarly for $h^n_{jj'}$) the graph distance from $u_i$ to common ancestor of $u_i$ and $u_{i'}$ in $\Gamma^{(m)}$, as shown in Figure \ref{fig:ancestor}.
\begin{figure}[ht]
\centering
\includegraphics[height=5cm]{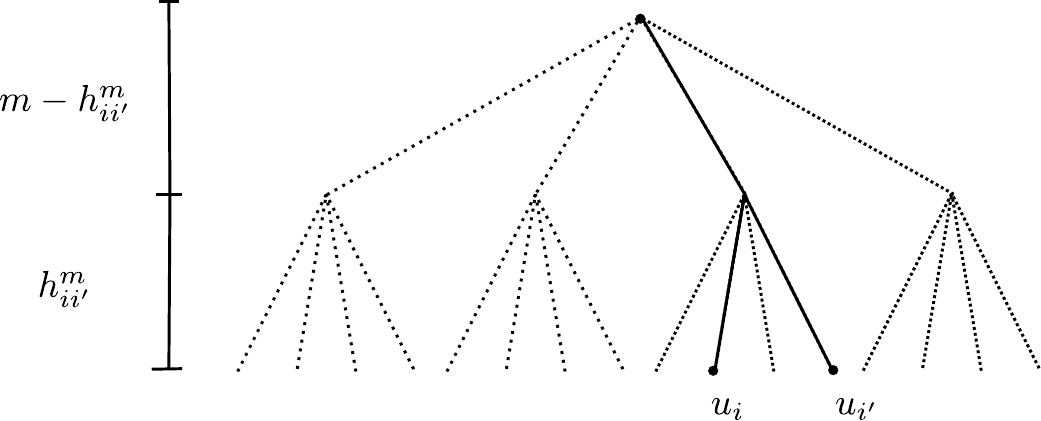}
\caption{Illustration for $h^m_{ii'}$, $u_i$, and $u_{i'}$.}
\label{fig:ancestor}
\end{figure}

\begin{lemma}\label{lem:up1}
Let $d\ge 1$, let $p,q\in(0,1)$ with property \eqref{eq:nonempty}. For every $1\le m\le n$ and every finite set 
$A\subset[-2^{n-1}-2^{m-1},2^{n-1}+2^{m-1})^d\cap\mathbb Z^d$,
there is a family of nonnegative coefficients $(a_{ij})_{1\le i\le 2^{dm},1\le j\le 2^{dn}}$ such that
\begin{equation}\label{eq:aij0}
\begin{aligned}
a_{ij}=0\text{ when }i\oplus j\not\in A,\quad
\sum_{i,j}a_{ij}=1,
\end{aligned}
\end{equation}
and
\[
{\mathbb P\pars*{
(Q_d(p;m)+\widehat Q_d(q;n))\cap A\ne\emptyset
}}
\le
64p^mq^n\pars*{\sum_{i,i',j,j'}a_{ij}a_{i'j'}
p^{h^m_{ii'}}q^{h^n_{jj'}}}^{-1}.
\]
\end{lemma}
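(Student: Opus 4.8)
The plan is to package the hitting event for $Q_d(p;m)+\widehat Q_d(q;n)$ as a hitting event for a product of two Markov chains, and then invoke Theorem~\ref{thm:salisbury}. By the percolation-on-tree description of Section~\ref{sec:percolation}, applied to $\Gamma^{(m)}$ and $\Gamma^{(n)}$ separately, the set $Q_d(p;m)+\widehat Q_d(q;n)$ consists exactly of those points $i\oplus j$ for which there is an open path from the root to $u_i$ in $\Gamma^{(m)}$ and an open path from the root to $\widehat u_j$ in $\Gamma^{(n)}$. Translating this via Definition~\ref{def:tree_to_Markov} and the equivalence \eqref{eq:iff2}, we have
\[
(Q_d(p;m)+\widehat Q_d(q;n))\cap A\ne\emptyset
\iff
\exists i,j:\ i\oplus j\in A,\ X_i=(1,\dots,1),\ Y_j=(1,\dots,1),
\]
where $(X_i)_{i=1}^{2^{dm}}$ is the Markov chain $(Y_i^{(m)})$ from Lemma~\ref{lem:chain} built from the Bernoulli$(p)$ percolation on $\Gamma^{(m)}$, and $(Y_j)_{j=1}^{2^{dn}}$ is the analogous chain on $\Gamma^{(n)}$ with parameter $q$; these two chains are independent.

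With this identification I would set
\[
K:=\setof{(i,j;x,y)}{i\oplus j\in A,\ x=(1,\dots,1)\in\{0,1\}^m,\ y=(1,\dots,1)\in\{0,1\}^n},
\]
so that the hitting event is precisely $\{\exists i,j:(i,j;X_i,Y_j)\in K\}$. If this event has zero probability the bound is vacuous (the left side is $0$), so assume it is positive and apply Theorem~\ref{thm:salisbury}: there exist nonnegative coefficients $\widetilde a_{ijxy}$, supported on $K$, with
\[
\mathbb P\pars*{(Q_d(p;m)+\widehat Q_d(q;n))\cap A\ne\emptyset}\le 64\,\frac{\mathbb E[S_K]^2}{\mathbb E[S_K^2]},
\qquad
S_K=\sum_{(i,j;x,y)\in K}\widetilde a_{ijxy}\1_{X_i=x,Y_j=y}.
\]
Since $K$ only involves the single state $x=\mathbf 1$, $y=\mathbf 1$ in each coordinate, the sum collapses: writing $a_{ij}:=\widetilde a_{ij,\mathbf 1,\mathbf 1}$ (normalized so $\sum_{i,j}a_{ij}=1$, which is legitimate because $\mathbb E[S_K]>0$ forces $S_K\not\equiv 0$, and the right-hand ratio is scale-invariant), we get $a_{ij}=0$ unless $i\oplus j\in A$, i.e.\ \eqref{eq:aij0}. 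The domain hypothesis $A\subset[-2^{n-1}-2^{m-1},2^{n-1}+2^{m-1})^d\cap\mathbb Z^d$ guarantees that every $i\oplus j$ relevant to $K$ can indeed land in $A$, so the construction is nonempty.

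It remains to evaluate $\mathbb E[S_K]$ and $\mathbb E[S_K^2]$ in terms of the tree geometry. We have $\mathbb E[S_K]=\sum_{i,j}a_{ij}\,\mathbb P(X_i=\mathbf 1)\mathbb P(Y_j=\mathbf 1)=\sum_{i,j}a_{ij}\,p^m q^n=p^m q^n$ by \eqref{eq:aij0}, since each coordinate of $X_i$ equal to $1$ costs a factor $p$ over a path of length $m$ (and similarly $q^n$). For the second moment, $\mathbb E[S_K^2]=\sum_{i,i',j,j'}a_{ij}a_{i'j'}\,\mathbb P(X_i=X_{i'}=\mathbf 1)\,\mathbb P(Y_j=Y_{j'}=\mathbf 1)$, and by the argument already used in Lemma~\ref{lem:xy_distance}, $\mathbb P(X_i=X_{i'}=\mathbf 1)=p^{m+h^m_{ii'}}$ because the path to the common ancestor (length $m-h^m_{ii'}$) is shared and the two pendant paths (each length $h^m_{ii'}$) are independent, giving $2m-(m-h^m_{ii'})=m+h^m_{ii'}$ open edges; likewise $\mathbb P(Y_j=Y_{j'}=\mathbf 1)=q^{n+h^n_{jj'}}$. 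Hence $\mathbb E[S_K^2]=p^m q^n\sum_{i,i',j,j'}a_{ij}a_{i'j'}p^{h^m_{ii'}}q^{h^n_{jj'}}$, and plugging into the Salisbury bound yields exactly the claimed inequality. The only real subtlety is the bookkeeping around normalizing the $a_{ij}$ and confirming the product-chain structure really has the two chains independent and supported so that $K$ is nonempty; the probability computations themselves are immediate from the coupling in Section~\ref{sec:percolation}.
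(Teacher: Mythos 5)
Your proof is correct and follows essentially the same route as the paper: apply Theorem~\ref{thm:salisbury} to the two Markov chains of Lemma~\ref{lem:chain} with the same choice of $K$, use the equivalences \eqref{eq:iff1}--\eqref{eq:iff2} to translate the hitting event, exploit scale-invariance of the ratio $\mathbb E[S_K]^2/\mathbb E[S_K^2]$ to normalize the coefficients, and then compute the first and second moments via the tree geometry exactly as the paper does. The explicit remarks about the degenerate zero-probability case and the counting $2m-(m-h^m_{ii'})=m+h^m_{ii'}$ are just slightly more verbose bookkeeping of the same argument.
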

\begin{proof}
Let $Y$ and $\widehat Y$ be the Markov chains of Lemma \ref{lem:chain} associated with $Q_d(p;m)$ and $\widehat Q_d(q;n)$.
Abbreviate $
\mathbf 1_k:=(1,1,\dots,1)$ for $k$ consecutive $1$s.

We apply Theorem \ref{thm:salisbury} to them together with the set
\[
K:=\setof{(i,j;\mathbf 1_m,\mathbf 1_n)}{i\oplus j\in A},
\]
then there are nonnegative coefficients $(a_{ij})$ such that
\[
\mathbb P\pars*{\exists i,j,
Y_i=\mathbf 1_m,\widehat Y_j=\mathbf 1_n,i\oplus j\in A
}
\le
64\frac{\mathbb E[S_A]^2}{\mathbb E[S_A^2]},
\]
where \[
S_A=\sum_{i,j}a_{ij}\1_{Y_i=\mathbf 1_m,\widehat Y_j=\mathbf 1_n}\1_{i\oplus j\in A}.
\]

By \eqref{eq:iff1} and \eqref{eq:iff2}, 
\[
Y_i=\mathbf 1_m,\widehat Y_j=\mathbf 1_n\iff\mathscr R_m(u_i)\in Q_d(p;m),\mathscr R_n(\widehat u_j)\in \widehat Q_d(q;n),
\]
so we obtain
\begin{align}\label{eq:bound_second_moment}
\mathbb P\pars*{
(Q_d(p;m)+\widehat Q_d(q;n))\cap A\ne\emptyset
}
\le
64\frac{\mathbb E[S_A]^2}{\mathbb E[S_A^2]}.
\end{align}

Note that $\frac{\mathbb E[S_A]^2}{\mathbb E[S_A^2]}$ does not change if we replace $(a_{ij})$ by its multiple $(ca_{ij})$, and does not change if we change $a_{ij}$ for some index $i\oplus j\not\in A$,
we can modify these coefficients to fit \eqref{eq:aij0}.

Finally, we expand
\[
\mathbb E[S_A]=\sum_{i,j}a_{ij}\mathbb P(Y_i=\mathbf 1_m,\widehat Y_j=\mathbf 1_n)=p^{m}q^n,
\]
and
\begin{align*}
\mathbb E[S_A^2]
=&\sum_{i,i',j,j'}a_{ij}a_{i'j'}
\mathbb P(Y_i=Y_{i'}=\mathbf 1_m)
\mathbb P(\widehat Y_j=\widehat Y_{j'}=\mathbf 1_n)\\
=&\sum_{i,i',j,j'}a_{ij}a_{i'j'}
p^{m+h^m_{ii'}}q^{n+h^n_{jj'}}.
\end{align*}
The conclusion follows by putting these equations into \eqref{eq:bound_second_moment}. 
\end{proof}

\begin{lemma}\label{lem:up2}
Let $d\ge 1$, let $p,q\in(0,1)$ with property \eqref{eq:nonempty}.
Let $1\le m\le n$ and $A\subset[-2^{n-1}-2^{m-1},2^{n-1}+2^{m-1})^d\subset\mathbb Z^d$.
Let $(a_{ij})$ be any collection of nonnegative coefficients satisfying \eqref{eq:aij0}, then
\[
(2^dp)^{\log_2\diam(A)\vee m}\sum_{i,i',j,j'}a_{ij}a_{i'j'}
p^{h^m_{ii'}}q^{h^n_{jj'}}
\gtrsim
p^{m}\sum_{i,i',j,j'}a_{ij}a_{i'j'}|i\oplus j-i'\oplus j'|^{-\beta}.
\]
Here $\gtrsim$ only depend on $d,p,q$ (independent of $m,n,A$).
\end{lemma}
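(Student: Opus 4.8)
The plan is to compare, term by term in the double sum over $(i,i',j,j')$, the "tree distance" weight $p^{h^m_{ii'}}q^{h^n_{jj'}}$ against the "Euclidean" weight $|i\oplus j - i'\oplus j'|^{-\beta}$. Fix such a quadruple and write $h=h^m_{ii'}$, $h'=h^n_{jj'}$; note $0\le h\le m$ and $0\le h'\le n$. The geometric content is: $u_i,u_{i'}$ share an ancestor at tree-height $h$, so $\mathscr R_m(u_i)$ and $\mathscr R_m(u_{i'})$ lie in a common cube congruent to $\Delta_h$, whence $|\mathscr R_m(u_i)-\mathscr R_m(u_{i'})|\le\sqrt d\,2^h$ exactly as in \eqref{eq:h_bound}; likewise $|\mathscr R_n(\widehat u_j)-\mathscr R_n(\widehat u_{j'})|\le\sqrt d\,2^{h'}$. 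By the triangle inequality,
\[
|i\oplus j-i'\oplus j'|\le \sqrt d\,(2^h+2^{h'})\le 2\sqrt d\,2^{h\vee h'},
\]
and since $j\le n$ forces $h'\le n$ while also $A\subset \Delta_{n+1}$ (roughly), the relevant range is $h\le m$, $h'\le n$. Taking $-\beta$ powers (recall $\beta>0$) gives the lower bound $|i\oplus j-i'\oplus j'|^{-\beta}\gtrsim (2^{h\vee h'})^{-\beta}=2^{-\beta(h\vee h')}$. So it suffices to show, for every $0\le h\le m$ and $0\le h'\le n$,
\[
(2^dp)^{(\log_2\diam(A))\vee m}\,p^{h}q^{h'}\ \gtrsim\ p^{m}\,2^{-\beta(h\vee h')}.
\]

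To verify this pointwise inequality, recall $\beta=-\log_2(2^dpq)$, i.e. $2^{-\beta}=2^dpq$, so $2^{-\beta(h\vee h')}=(2^dpq)^{h\vee h'}$. Canceling, the claim becomes
\[
(2^dp)^{M}\,p^{h}q^{h'}\ \gtrsim\ p^{m}(2^dp)^{h\vee h'}q^{h\vee h'},
\qquad M:=(\log_2\diam(A))\vee m .
\]
Assume WLOG $h\le h'$ (the other case is symmetric and in fact easier since then $q^{h'}$ already supplies the $q^{h\vee h'}$ factor). Then the right side is $p^{m}(2^dp)^{h'}q^{h'}$, and after dividing by $q^{h'}$ and $p^h$ we must check $(2^dp)^M\gtrsim p^{m-h}(2^dp)^{h'}$. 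Since $h'\le n$ this is not automatic; the saving comes from the fact that both points $\mathscr R_n(\widehat u_j),\mathscr R_n(\widehat u_{j'})$ lie in a cube of side $2^{h'}$, and because $i\oplus j, i'\oplus j'\in A$ they also lie within $\diam(A)$ of each other after adding the $Q_d(p;m)$-coordinates; combined with $|\mathscr R_m(u_i)-\mathscr R_m(u_{i'})|\le \sqrt d\,2^h$ this yields $2^{h'}\lesssim \diam(A)+2^m$, hence $h'\le M+O(1)$. Since $2^dp>1$ (from $p>2^{-d}$ in \eqref{eq:nonempty}), the function $t\mapsto (2^dp)^t$ is increasing, so $(2^dp)^{h'}\lesssim (2^dp)^{M}$; and $p<1$ gives $p^{m-h}\le 1$. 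Combining, $(2^dp)^M\gtrsim p^{m-h}(2^dp)^{h'}$ up to a constant depending only on $d,p,q$, which is what we needed.

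The main obstacle is getting the bound $h'\le M+O(1)$ honestly, i.e. controlling the height of the common ancestor in the \emph{large} tree $\Gamma^{(n)}$ by the diameter of the target set rather than by $n$. One has to be slightly careful because the two summed points $i\oplus j$ and $i'\oplus j'$ each combine a $\Gamma^{(m)}$-contribution and a $\Gamma^{(n)}$-contribution; the estimate $2^{h'}\le \sqrt d^{-1}|\mathscr R_n(\widehat u_j)-\mathscr R_n(\widehat u_{j'})|\cdot\text{const}$ is only a lower-type bound on $h'$ when $j\neq j'$, while if $j=j'$ then $h'=0$ and there is nothing to prove, so one splits into cases $j=j'$ (then $h'=0\le M$, done immediately) and $j\neq j'$, and in the latter case uses $|\mathscr R_n(\widehat u_j)-\mathscr R_n(\widehat u_{j'})|\le |i\oplus j-i'\oplus j'|+|\mathscr R_m(u_i)-\mathscr R_m(u_{i'})|\le \diam(A)+\sqrt d\,2^m$ together with the fact that distinct cubes congruent to $\Delta_{h'}$ inside $\Delta_n$ are at mutual distance comparable to $2^{h'}$ from below (more precisely, two points in distinct such cubes with common parent cube of side $2^{h'+1}$ satisfy $|\cdot|\le \sqrt d\,2^{h'+1}$, which is the bound actually used). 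Once $h'\lesssim M$ is in hand, the rest is the elementary monotonicity argument above, and I would present it compactly rather than belaboring constants.
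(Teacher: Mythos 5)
Your argument is termwise, and that is where it breaks. From $|i\oplus j-i'\oplus j'|\le \sqrt d\,(2^{h}+2^{h'})$ you only get a \emph{lower} bound on the kernel, $|i\oplus j-i'\oplus j'|^{-\beta}\gtrsim 2^{-\beta(h\vee h')}$, so proving $(2^dp)^{M}p^{h}q^{h'}\gtrsim p^{m}2^{-\beta(h\vee h')}$ does not dominate the right-hand side of the lemma; for that you would need an \emph{upper} bound on $|i\oplus j-i'\oplus j'|^{-\beta}$, i.e.\ a lower bound on the Euclidean distance in terms of the tree heights, and no such bound exists: two leaves of $\Gamma^{(n)}$ can correspond to adjacent lattice points lying on opposite sides of a high-level dyadic hyperplane, in which case $h^n_{jj'}=n$ while the Euclidean distance is $1$. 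The same dyadic-boundary effect invalidates your key claim $h'\le M+O(1)$: "distinct cubes congruent to $\Delta_{h'}$ are at mutual distance comparable to $2^{h'}$ from below" is false (neighbouring dyadic cubes at every level touch), and knowing that the two points lie in one cube of side $2^{h'}$ and are within distance $\diam(A)+\sqrt d\,2^m$ of each other does not force $2^{h'}\lesssim\diam(A)+2^m$. Concretely, take $A$ to consist of two adjacent points straddling the central hyperplane of $\Delta_n$, with $m$ fixed and $n$ large: there are admissible quadruples with $h^m_{ii'}=m$, $h^n_{jj'}=n$, $|i\oplus j-i'\oplus j'|=1$, and for these the termwise inequality would require $(2^dp)^{m}q^{n}\gtrsim 1$, which fails. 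The lemma is nevertheless true because such pairs carry negligible weight in the \emph{summed} energies; no pointwise comparison can see this.

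This is exactly why the paper argues globally rather than term by term. It splits the right-hand sum according to whether $|\mathscr R_n(\widehat u_j)-\mathscr R_n(\widehat u_{j'})|$ exceeds $2\sqrt d\,2^{m}$. For the far pairs it uses the trivial bound $p^{h^m_{ii'}}\ge p^m$, extracts the factor $(2^dp)^{\log_2\diam(A)}$, and then invokes the tree-energy versus Euclidean-energy comparison of \cite[Theorem 4.2]{peres1996intersection}, which is an inequality between the full sums $\sum a_{ij}a_{i'j'}q^{h^n_{jj'}}$ and $\sum a_{ij}a_{i'j'}|\mathscr R_n(\widehat u_j)-\mathscr R_n(\widehat u_{j'})|^{\log_2 q}$ (it is precisely the direction that fails pointwise). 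For the near pairs it runs a counting argument over quadruples of ancestors (the sets $\Theta_h$), whose cardinality bound $\lesssim 2^{d(m-h)}$ produces the factor $2^{dm}$ appearing as $(2^dp)^m p^{-m}$ in the statement. To repair your proof you would have to replace the pointwise reduction by some such averaged mechanism; the monotonicity computation at the end is fine but only kicks in after that, so as it stands the proposal has a genuine gap.
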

\begin{proof}

We claim that
\begin{enumerate}
\item
\begin{equation}\label{eq:good_side}
\begin{aligned}
&2^{dm}\sum_{i,i',j,j'}a_{ij}a_{i'j'}
p^{h^m_{ii'}}q^{h^n_{jj'}}\\
\gtrsim&\sum_{i,i',j,j':|\mathscr R_n(\widehat u_j)-\mathscr R_n(\widehat u_{j'})|\le 2\sqrt d 2^m}a_{ij}a_{i'j'}|i\oplus j-i'\oplus j'|^{-\beta}.
\end{aligned}
\end{equation}
\item If $m<\log_2\diam(A)$, then
\begin{equation}\label{eq:bad_side}
\begin{aligned}
&(2^dp)^{\log_2\diam(A)}\sum_{i,i',j,j'}a_{ij}a_{i'j'}
p^{h^m_{ii'}}q^{h^n_{jj'}}\\
\gtrsim&p^m\sum_{i,i',j,j':|\mathscr R_n(\widehat u_j)-\mathscr R_n(\widehat u_{j'})|\ge 2\sqrt d 2^m}a_{ij}a_{i'j'}|i\oplus j-i'\oplus j'|^{-\beta},
\end{aligned}
\end{equation}
\end{enumerate}

Let us first demonstrate that these claims are sufficient to give Lemma \ref{lem:up2}.
In fact, if $m\ge\log_2\diam(A)$, then 
the condition $|\mathscr R_n(\widehat u_j)-\mathscr R_n(\widehat u_{j'})|\le 2\sqrt d2^m$ trivially includes every choice of $(\widehat u_j,\widehat u_{j'})$ such that $i\oplus j,i'\oplus j'\in A$, because
\begin{align*}
|\mathscr R_n(\widehat u_j)-\mathscr R_n(\widehat u_{j'})|
\le &|i\oplus j-i'\oplus j'|+|\mathscr R_m(u_i)-\mathscr R_m(u_i')|\\
\le &\diam(A)+\sqrt d2^m\le 2\sqrt d2^m.
\end{align*}
Therefore, \eqref{eq:good_side} is already our desired result. If $m<\log_2\diam(A)$, then $(2^dp)^m<(2^dp)^{\log_2\diam(A)}$ (recall by \eqref{eq:nonempty} that $2^dp>1$), and the conclusion follows by adding \eqref{eq:good_side} and \eqref{eq:bad_side}.

Let us begin by proving \eqref{eq:bad_side}, given that $m<\log_2\diam(A)$.
By the trivial bound $p^{h^m_{ii'}}\ge p^m$, it suffices to show that
\begin{equation}\label{eq:bad_sidea}
\begin{aligned}
&(2^{d}p)^{\log_2\diam(A)}\sum_{i,i',j,j'}a_{ij}a_{i'j'}
q^{h^n_{jj'}}\\
\gtrsim&\sum_{i,i',j,j':|\mathscr R_n(\widehat u_j)-\mathscr R_n(\widehat u_{j'})|\ge 2\sqrt d 2^m}a_{ij}a_{i'j'}|i\oplus j-i'\oplus j'|^{-\beta},
\end{aligned}
\end{equation}
where we recall that $\beta=-\log_2(2^dpq)>0$.
Since $(a_{ij})$ is supported on those $(i,j)$ with $i\oplus j\in A$, and $2^dp>1$, we can simplify the right hand side by
\begin{align*}
&\text{RHS}_{\eqref{eq:bad_sidea}}\\
=&\sum_{i,i',j,j':|\mathscr R_n(\widehat u_j)-\mathscr R_n(\widehat u_{j'})|\ge 2\sqrt d 2^m}a_{ij}a_{i'j'}(2^dpq)^{\log_2|i\oplus j-i'\oplus j'|}\\
\le&(2^dp)^{\log_2\diam(A)}\sum_{i,i',j,j':|\mathscr R_n(\widehat u_j)-\mathscr R_n(\widehat u_{j'})|\ge 2\sqrt d 2^m}a_{ij}a_{i'j'}q^{\log_2|i\oplus j-i'\oplus j'|}.
\end{align*}
Note that for each $i,i'$, $|\mathscr R_m(u_i)-\mathscr R_m(u_{i'})|\le\sqrt d 2^m$ because they live in $\Delta_m$. Therefore, $|\mathscr R_n(\widehat u_j)-\mathscr R_n(\widehat u_{j'})|\ge 2\sqrt d 2^m$ implies
\[
|i\oplus j-i'\oplus j'|\ge \frac 12|\mathscr R_n(\widehat u_j)-\mathscr R_n(\widehat u_{j'})|,
\]
so we further simplify
\begin{align*}
&\text{RHS}_{\eqref{eq:bad_sidea}}\\
\lesssim &(2^dp)^{\log_2\diam(A)}\sum_{i,i',j,j':|\mathscr R_n(\widehat u_j)-\mathscr R_n(\widehat u_{j'})|\ge 2\sqrt d 2^m}a_{ij}a_{i'j'}q^{\log_2|\mathscr R_n(\widehat u_j)-\mathscr R_n(\widehat u_{j'})|}\\
\le &(2^dp)^{\log_2\diam(A)}\sum_{i,i',j,j'}a_{ij}a_{i'j'}{|\mathscr R_n(\widehat u_j)-\mathscr R_n(\widehat u_{j'})|}^{\log_2 q}.
\end{align*}
Put this into \eqref{eq:bad_sidea}, it now suffices to show 
\begin{align*}
\sum_{i,i',j,j'}a_{ij}a_{i'j'}
q^{h^n_{jj'}}
\gtrsim \sum_{i,i',j,j'}a_{ij}a_{i'j'}{|\mathscr R_n(\widehat u_j)-\mathscr R_n(\widehat u_{j'})|}^{\log_2 q},
\end{align*}
and this is known by \cite[Theorem 4.2]{peres1996intersection}.
Hence we have proved \eqref{eq:bad_side}.

It remains to prove \eqref{eq:good_side}.
Denote by $|v|$ the generation (distance to the root) of a vertex $v$ in a tree, and denote
\[
a_{[v][\widehat{v}]}=\sum_{i\succeq v,j\succeq \widehat{v}}a_{ij},
\]
then we can bound the sum in left-hand side of \eqref{eq:good_side} by
\begin{equation}\label{eq:R0}
\begin{aligned}
2^{-dm}\text{LHS}_{\eqref{eq:good_side}}=&\sum_{i,i',j,j'}a_{ij}a_{i'j'}
p^{h^m_{ii'}}q^{h^n_{jj'}}\\
\gtrsim&\sum_{v\in\Gamma^{(m)},\widehat{v}\in\Gamma^{(n)}}p^{m-|v|}q^{n-|\widehat{v}|}\sum_{i,i'\succeq v,j,j'\succeq \widehat{v}}a_{ij}a_{i'j'}\\
\gtrsim&\sum_{h=0}^m\sum_{|v|=m-h,|\widehat{v}|=n-h}p^{h}q^{h}\sum_{i,i'\succeq v,j,j'\succeq \widehat{v}}a_{ij}a_{i'j'}\\
=&\sum_{h=0}^m\sum_{|v|=m-h,|\widehat{v}|=n-h}p^{h}q^{h}a_{[v][\widehat{v}]}^2,
\end{aligned}
\end{equation}
where in the second line we are summing over each vertex $v$ (and $\widehat v$) on the path from root to common ancestor of $u_i,u_{i'}$ (and $\widehat u_j,\widehat u_{j'}$).

For each $h=0,1,\dots,m$, we say four vertices $(x,y,z,w)$ are $h$-regular, if $x,y$ are in the $(m-h)$-th generation of $\Gamma^{(m)}$, 
$z,w$ are in the $(n-h)$-th generation of $\Gamma^{(n)}$, and there exists a quadruple $(i,i',j,j')$ such that $(x,y,z,w)$ are ancestors of $(u_i,u_{i'},u_j,u_{j'})$ respectively (denoted by 
$
(x,y,z,w)\preceq(i,i',j,j')
$),
and
\begin{align}\label{eq:def_Xi}
|i\oplus j-i'\oplus j'|\le 3{\sqrt d}2^{h}.
\end{align}
Denote by $\Theta_h$ the collection of all $h$-regular quadruples, as illustrated in Figure \ref{fig:Xi}.

\begin{figure}[ht]
\centering
\includegraphics[height=5cm]{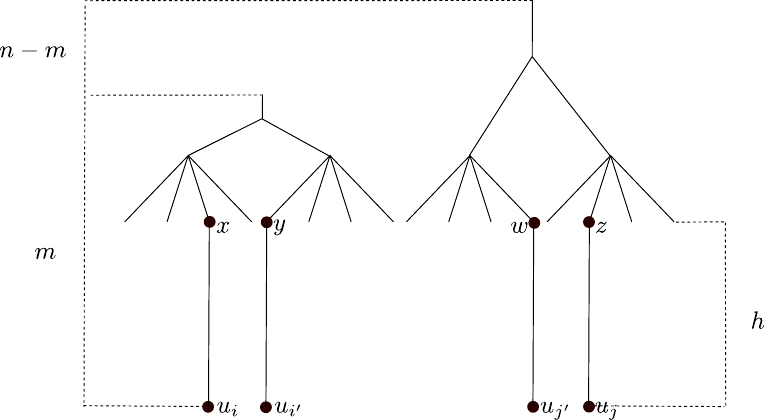}
\caption{Illustration of $(x,y,z,w)\in\Theta_h$. 
}
\label{fig:Xi}
\end{figure}

We observe from the definition of $\Theta_h$ that,
\begin{align}
&\sum_{h=0}^m\sum_{(x,y,z,w)\in\Theta_h}
\sum_{(i,i',j,j')\succeq(x,y,z,w)}2^{-\beta h}a_{ij}a_{i'j'}\nonumber\\
\ge&\sum_{h=0}^m\sum_{(x,y,z,w)\in\Theta_h}
\sum_{(i,i',j,j')\succeq(x,y,z,w)}2^{-\beta h}a_{ij}a_{i'j'}\1_{\braces{|i\oplus j-i'\oplus j'|\le 3{\sqrt d}2^{h}}}\nonumber\\
=&\sum_{i,i',j,j'}a_{ij}a_{i'j'}\sum_{h=0}^m 2^{-\beta h}\1_{\braces{|i\oplus j-i'\oplus j'|\le 3{\sqrt d}2^{h}}}\nonumber\\
\gtrsim &\sum_{i,i',j,j':|i\oplus j-i'\oplus j'|\le 3\sqrt d 2^m}a_{ij}a_{i'j'}\cdot \pars{|i\oplus j-i'\oplus j'|}^{-\beta}\gtrsim \text{RHS}_{\eqref{eq:good_side}}.\label{eq:R1}
\end{align}

Moreover,
\begin{align}
&\sum_{h=0}^m\sum_{(x,y,z,w)\in\Theta_h}
\sum_{(i,i',j,j')\succeq(x,y,z,w)}2^{-\beta h}a_{ij}a_{i'j'}
\nonumber\\
=&\sum_{h=0}^m\sum_{(x,y,z,w)\in\Theta_h}
2^{-\beta h}a_{[x][z]}a_{[y][w]}
\nonumber\\
\le&\sum_{h=0}^m2^{-\beta h}
\sum_{(x,y,z,w)\in\Theta_h}
\frac 1 2(a^2_{[x][z]}+a^2_{[y][w]})
\nonumber\\
=&\sum_{h=0}^m2^{-\beta h}\sum_{x,z}
a^2_{[x][z]}\#\setof{(y,w)}{(x,y,z,w)\in\Theta_h},
\label{eq:xyzw}
\end{align}
where in the last line we use that $(x,z)$ and $(y,w)$ are symmetric.

For each $x\in\Gamma^{(k)}$, denote by $\mathscr R_k^*(x)$ the center of the cube $\mathscr R_k(x)$. When $x$ is ancestor of $u_i$ in generation $h$, we have
\[
|\mathscr R^*_k(x)-\mathscr R_k(u_i)|\le \sqrt d 2^{h}.
\]
So for every $(x,y,z,w)\in\Theta_h$,
\begin{align*}
|\mathscr R^*_k(x)-\mathscr R^*_k(y)+\mathscr R^*_k(z)-\mathscr R^*_k(w)|\le 7\sqrt d 2^{h}.
\end{align*}
In particular, for each triple $(x,y,z)$, if we want to choose $w$ so that $(x,y,z,w)\in\Theta_h$, then
$\mathscr R^*_k(w)$ lives in a ball of side length $7\sqrt d 2^{h}$. For different choices of $w$, $(\mathscr R_k(w))_w$ are disjoint cubes of side length $2^{h}$, so we have at most a constant number ($(2+7\sqrt d)^d$) of choices for it, as illustrated in Figure \ref{fig:2}. Therefore, for each $(x,z)$, by choosing $y$ arbitrarily and then using the observation above, we obtain
\begin{align}\label{eq:ancestor_close}
\#\setof{(y,w)}{(x,y,z,w)\in\Theta_h}\lesssim 2^{d(m-h)}.
\end{align}

\begin{figure}[ht]
\centering
\includegraphics[height=5cm]{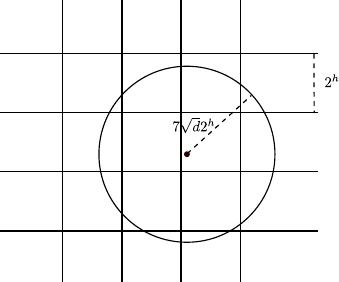}
\caption{The ball with radius $7\sqrt d 2^{h}$ intersects at most $(2+7\sqrt d)^d$ tiles of size $2^{h}$.}
\label{fig:2}
\end{figure}

Put this into \eqref{eq:xyzw}, note that $2^{-\beta h}=2^{h\log_2(2^dpq)}=(2^dpq)^h$,
we have
\begin{equation}\label{eq:R2}
\begin{aligned}
&\sum_{h=0}^m\sum_{(x,y,z,w)\in\Theta_h}
\sum_{i,i',j,j'}2^{-\beta h}a_{ij}a_{i'j'}\1_{(i,i',j,j')\succeq(x,y,z,w)}
\\
\lesssim&\sum_{h=0}^m2^{-\beta h+d(m-h)}\sum_{|x|=m-h,|z|=n-h}
a^2_{[x][z]}
=2^{dm}\sum_{h=0}^m(pq)^{h}\sum_{|x|=m-h,|z|=n-h}
a^2_{[x][z]}.
\end{aligned}
\end{equation}
Finally, \eqref{eq:good_side} follows by combining \eqref{eq:R0}, \eqref{eq:R1}, and \eqref{eq:R2}. 
\end{proof}

Now we are ready to give upper bound in Theorem \ref{thm:main}. For later usage, we extend the domain where the target set stays from $[-2^{n-1},2^{n-1})^d\cap\mathbb Z^d$ to $[-2^{n-1}-2^{m-1},2^{n-1}+2^{m-1})^d\cap\mathbb Z^d$.
\begin{lemma}\label{lem:main_upper}
Let $d\ge 1$, let $p,q\in(0,1)$ with property \eqref{eq:nonempty}.
For every $1\le m\le n$ and every finite set 
$A\subset[-2^{n-1}-2^{m-1},2^{n-1}+2^{m-1})^d\cap\mathbb Z^d$,
\begin{equation*}
\mathbb P((Q_d(p;m)+\widehat Q_d(q;n))\cap A\ne\emptyset)
\lesssim (2^dp)^{m\vee\log_2\diam(A)}q^n\Cap_\beta(A).
\end{equation*}
Here $\lesssim$ only depends on $d,p,q$ (independent of $m,n,A$).
\end{lemma}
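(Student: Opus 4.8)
The plan is to combine the two lemmas just proved, namely Lemma~\ref{lem:up1} and Lemma~\ref{lem:up2}, with the definition of Newtonian capacity. Lemma~\ref{lem:up1} produces a family of nonnegative coefficients $(a_{ij})$, supported on pairs with $i\oplus j\in A$ and summing to $1$, such that
\[
\mathbb P\pars*{(Q_d(p;m)+\widehat Q_d(q;n))\cap A\ne\emptyset}
\le
64\,p^mq^n\pars*{\sum_{i,i',j,j'}a_{ij}a_{i'j'}p^{h^m_{ii'}}q^{h^n_{jj'}}}^{-1}.
\]
Lemma~\ref{lem:up2} then lower bounds the sum in the denominator: up to a constant depending only on $d,p,q$,
\[
\sum_{i,i',j,j'}a_{ij}a_{i'j'}p^{h^m_{ii'}}q^{h^n_{jj'}}
\gtrsim
(2^dp)^{-(m\vee\log_2\diam(A))}p^m\sum_{i,i',j,j'}a_{ij}a_{i'j'}\abs{i\oplus j-i'\oplus j'}^{-\beta}.
\]
Substituting this bound into the previous display cancels the $p^m$ factors and yields
\[
\mathbb P\pars*{(Q_d(p;m)+\widehat Q_d(q;n))\cap A\ne\emptyset}
\lesssim
(2^dp)^{m\vee\log_2\diam(A)}q^n
\pars*{\sum_{i,i',j,j'}a_{ij}a_{i'j'}\abs{i\oplus j-i'\oplus j'}^{-\beta}}^{-1}.
\]

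The remaining step is to recognize the reciprocal of that quadratic sum as being bounded by $\Cap_\beta(A)$. Since $(a_{ij})$ is supported on pairs with $i\oplus j\in A$ and $\sum_{i,j}a_{ij}=1$, the pushforward $\mu(z):=\sum_{i\oplus j=z}a_{ij}$ is a probability measure on $A$. One must be slightly careful: in general the map $(i,j)\mapsto i\oplus j$ is not injective, so I would check that
\[
\sum_{i,i',j,j'}a_{ij}a_{i'j'}\abs{i\oplus j-i'\oplus j'}^{-\beta}
\ge
\sum_{a,b\in A:a\ne b}\mu(a)\mu(b)\abs{a-b}^{-\beta},
\]
which follows because on the left we are additionally summing the (nonnegative, or at worst harmless) diagonal terms $i\oplus j=i'\oplus j'$ that we discard on the right; writing the left side as $\sum_{a,b\in A}\mu(a)\mu(b)(\abs{a-b}\vee 1)^{-\beta}\ge\sum_{a\ne b}\mu(a)\mu(b)\abs{a-b}^{-\beta}$ handles this cleanly. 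By the variational definition \eqref{eq:def_cap}, the right side is at least $\Cap_\beta(A)^{-1}$, so its reciprocal is at most $\Cap_\beta(A)$, giving exactly the claimed bound
\[
\mathbb P((Q_d(p;m)+\widehat Q_d(q;n))\cap A\ne\emptyset)
\lesssim (2^dp)^{m\vee\log_2\diam(A)}q^n\Cap_\beta(A).
\]

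In short, this lemma is essentially a bookkeeping synthesis: the analytic content is entirely in Lemmas~\ref{lem:up1} and~\ref{lem:up2} (and, beneath those, Theorem~\ref{thm:salisbury} and the tree estimates of \cite{peres1996intersection}), and the work here is just chaining the two inequalities and passing from the coefficient sum to the capacity via the induced probability measure. The only point requiring a moment's care is the non-injectivity of $\oplus$ and the treatment of diagonal/degenerate terms when collapsing the $(i,j)$-sum to a sum over $A$; everything else is immediate. I do not anticipate any genuine obstacle.
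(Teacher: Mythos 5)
Your argument is the paper's: chain Lemma~\ref{lem:up1} into Lemma~\ref{lem:up2}, push $(a_{ij})$ forward under $(i,j)\mapsto i\oplus j$ to a probability measure $\mu$ on $A$, and conclude via the variational characterization of $\Cap_\beta$. There is, however, a small but real slip in your last inference. You reduce to $\sum_{a\ne b}\mu(a)\mu(b)|a-b|^{-\beta}$ and assert that this is at least $\Cap_\beta(A)^{-1}$ ``by the variational definition,'' but \eqref{eq:def_cap} infimizes the \emph{full} energy $\sum_{a,b}\mu(a)\mu(b)(|a-b|\vee 1)^{-\beta}$, diagonal included, and discarding the diagonal moves in the wrong direction: if $\mu$ were a point mass the off-diagonal sum would be $0$ while $\Cap_\beta(A)^{-1}>0$. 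The fix is immediate and you essentially already wrote it: under the convention $(|\cdot|\vee 1)^{-\beta}$ implicit throughout the derivation of \eqref{eq:good_side}, the coefficient sum $\sum_{i,i',j,j'}a_{ij}a_{i'j'}|i\oplus j-i'\oplus j'|^{-\beta}$ is \emph{exactly} the pushforward energy $\sum_{a,b\in A}\mu(a)\mu(b)(|a-b|\vee 1)^{-\beta}$, and that full quantity is $\ge\Cap_\beta(A)^{-1}$; there is no need to split off the diagonal at all. With that one-line correction your proof coincides with the paper's.
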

\begin{proof}
By Lemma \ref{lem:up1} and Lemma \ref{lem:up2},
\[
{\mathbb P\pars*{
Q_d(p;m)+\widehat Q_d(q;n)\cap A\ne\emptyset
}}
\lesssim
(2^dp)^{m\vee\log_2\diam(A)}q^n\pars*{\sum_{i,i',j,j'}a_{ij}a_{i'j'}|i\oplus j-i'\oplus j'|^{-\beta}}^{-1}.
\]
Let
\[
\mu(x)=\sum_{i,j}a_{ij}\1_{\mathscr R_m(i)+\mathscr R_n(j)=x},
\]
then $\mu$ is a probability measure supported on $A$, and the equation above implies
\begin{align*}
&\mathbb P((Q_d(p;m)+\widehat Q_d(q;n))\cap A\ne\emptyset)\\
\lesssim&
(2^dp)^{m\vee\log_2\diam(A)}q^n\pars*{\sum\mu(x)\mu(y)|x-y|^{-\beta}}^{-1}\\
\le&
(2^dp)^{m\vee\log_2\diam(A)}q^n\Cap_\beta(A).
\end{align*}

\end{proof}

\begin{proof}[Proof of Theorem \ref{thm:main}]
It is now immediate by Lemma \ref{lem:main_lower} and Lemma \ref{lem:main_upper}.
\end{proof}

\section{Applications}

\subsection{Continuous case, proof of Corollary \ref{cor:main_continuous}}
\label{sec:3.1}
\begin{proof}[Proof of Corollary \ref{cor:main_continuous}]
The first conclusion \eqref{eq:main_discrete+} follows immediately from \eqref{eq:FP_cap} and Theorem \ref{thm:main} by taking $m=n$.

Below we consider the continuous case.
For any closed set $\Lambda\subset[-\frac12,\frac12]^d$, denote 
\[
T_k(\Lambda):=
\bigcup_{\setof{x}{2^kx\in \mathbb Z^d,x+[-2^{1-k},2^{1-k}]^d\cap\Lambda\ne\emptyset}}(x+[-2^{1-k},2^{1-k}]^d)
\]
then 
$T_k(\Lambda)\subset[-\frac12-2^{1-k},\frac12+2^{1-k}]^d$, and it is not hard to check that 
$(T_k(\Lambda))_k$ is a decreasing sequence of sets with
\[
\bigcap_{k\ge 0}T_k(\Lambda)=\Lambda.
\]

We claim that
\[
(Q_d(p)+\widehat Q_d(q))\cap \Lambda\ne\emptyset
\iff
\forall k,(Q_d(p;k)+\widehat Q_d(q;k))\cap 2^kT_k(\Lambda)\ne\emptyset.
\]

On one hand, if $(Q_d(p)+\widehat Q_d(q))\cap \Lambda\ne\emptyset$,
we take $a\in Q_d(p),b\in \widehat Q_d(q)$ with $a+b\in\Lambda$. Then by Definition \ref{def:1}, for each $k$ there is $a_k\in Q_d(p;k),b_k\in\widehat Q_d(q;k)$ such that
\[
2^ka\in a_k+[-1,1]^d,\;2^kb\in b_k+[-1,1]^d.
\]
This implies
\[
2^k(a+b)\in (a_k+b_k)+[-2,2]^d,
\]
so $2^{-k}(a_k+b_k)\in T_k(\Lambda)$.

On the other hand, if we have $a_k\in Q_d(p;k)$, $b_k\in \widehat Q_d(q;k)$ with $a_k+b_k\in2^k T_k(\Lambda)$ for every $k$, by looking for a subsequence we can assume $2^{-k}a_k\rightarrow a$, $2^{-k}b_k\rightarrow b$. 
Notice that $(2^{-k}\overline {Q_d(p;k)})$ is a decreasing sequence of closed set. For each fixed $k$, $2^{-k}\overline {Q_d(p;k)}$ contains $a_k,a_{k+1},\dots$, therefore also contains their limit point $a$. Hence $Q_d(p)$ contains $a$. Similarly we have $b\in \widehat Q_d(q)$ and $a+b\in \Lambda$.

This allows us to conclude that
\begin{align*}
\mathbb P\pars*{(Q_d(p)+\widehat Q_d(q))\cap \Lambda\ne\emptyset}
=\lim_{k\rightarrow\infty}\mathbb P\pars*{(Q_d(p;k)+\widehat Q_d(q;k))\cap 2^kT_k(\Lambda)\ne\emptyset}.
\end{align*}
Then we can apply Theorem \ref{thm:main} to $m=n=k$, $A=2^kT_k(\Lambda)$ (use Lemma \ref{lem:main_lower} and Lemma \ref{lem:main_upper} directly if $2^{-k}A\not\subset[-\frac 12,\frac 12]^d$ slightly exceeds the boundary)  and obtain
\begin{align*}
\mathbb P\pars*{(Q_d(p)+\widehat Q_d(q))\cap \Lambda\ne\emptyset}
&\asymp\lim_{k\rightarrow\infty}2^{-\beta k}\Cap_\beta(2^kT_k(\Lambda))\\
&=\lim_{k\rightarrow\infty}\Cap_\beta(T_k(\Lambda))
=\Cap_\beta(\Lambda),
\end{align*}
where $\asymp$ is independent of $k$.
This proves the first part of \eqref{eq:main_continuous}.
The second part of \eqref{eq:main_continuous} is immediate by \cite[Corollary 4.3]{peres1996intersection}. 

\end{proof}

\subsection{Changing parameter in capacity, proof of Corollary \ref{cor:cap_a_b}}\label{sec:3.2}

\begin{proof}[Proof of Corollary \ref{cor:cap_a_b}]
By \eqref{eq:FP_cap} (and symmetry in $Q$ and $-Q$),
\begin{align*}
&\mathbb P((Q_d(p;m)+\widehat Q_d(q;n))\cap A\ne\emptyset)\\
=&\mathbb P(\widehat Q_d(q;n)\cap (A+Q_d(p;m))\ne\emptyset)\\
\asymp&q^n\mathbb E[\Cap_{-\log_2 q}(A+Q_d(p;m))],
\end{align*}
as long as $n$ is large enough such that $A+Q_d(p;m)\subset[-2^{n-1},2^{n-1})^d$.

Combining this with Theorem \ref{thm:main}, we have
\begin{equation*}
(2^{d}p)^m\lesssim
\frac{\mathbb E[\Cap_{-\log_2 q}(A+Q_d(p;m))]}{\Cap_\beta(A)}
\lesssim (2^dp)^{m\vee\log_2\diam(A)}.
\end{equation*}
The first conclusion follows by taking $p=2^{b-a-d},q=2^{-b}$.

The continuous version follows from a similar argument as proof of Corollary \ref{cor:main_continuous} with $A=2^mT_m(\Lambda)$.
\end{proof}

\subsection{Consequences for sum of general random sets, proof of Theorem \ref{thm:rw+rw}}
\label{sec:3.3}

\begin{lemma}\label{lem:divide}
Let $k\ge 1$, $\beta>0$, let $A=A_1\cup\dots\cup A_k$. Then
\begin{align*}
\Cap_\beta(A)\asymp\Cap_\beta(A_1)+\dots+\Cap_\beta(A_k)
\asymp\max\braces{\Cap_\beta(A_1),\dots,\Cap_\beta(A_k)},
\end{align*}
where the constant in $\asymp$ only depends on $k$ (independent of $\beta,A,A_1,\dots,A_k$).
\end{lemma}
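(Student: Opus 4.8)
The two $\asymp$ claims split into four inequalities, of which three are essentially trivial: since $A_i\subset A$ and Newtonian capacity is monotone, $\Cap_\beta(A)\ge\Cap_\beta(A_i)$ for every $i$, hence $\Cap_\beta(A)\ge\max_i\Cap_\beta(A_i)\ge\frac1k\sum_i\Cap_\beta(A_i)$; and trivially $\max_i\Cap_\beta(A_i)\le\sum_i\Cap_\beta(A_i)$. So the entire content is the single inequality
\[
\Cap_\beta(A)\lesssim\max_{1\le i\le k}\Cap_\beta(A_i),
\]
with constant depending only on $k$.

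The plan is to prove this via the equivalent energy formulation: writing $I_\beta(\mu):=\sum_{x,y}\mu(x)\mu(y)(|x-y|\vee1)^{-\beta}$, we have $\Cap_\beta(B)^{-1}=\inf_{\mu}I_\beta(\mu)$ over probability measures on $B$, so it suffices to exhibit a probability measure $\mu$ on $A$ with $I_\beta(\mu)\le c_k\bigl(\min_i\Cap_\beta(A_i)^{-1}\bigr)$... no --- we want $I_\beta(\mu)\le c_k/\max_i\Cap_\beta(A_i)$, i.e.\ $I_\beta(\mu)\le c_k\min_i I_\beta(\mu_i^*)$ where $\mu_i^*$ is the equilibrium measure of $A_i$. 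Let $i_0$ be the index achieving the maximum capacity, equivalently the minimum energy $I_\beta(\mu_{i_0}^*)$, and simply take $\mu:=\mu_{i_0}^*$, viewed as a measure on $A$ (legitimate since $A_{i_0}\subset A$). Then $I_\beta(\mu)=I_\beta(\mu_{i_0}^*)=1/\Cap_\beta(A_{i_0})=1/\max_i\Cap_\beta(A_i)$, so $\Cap_\beta(A)\ge\Cap_\beta(A_{i_0})=\max_i\Cap_\beta(A_i)$ --- but this is just monotonicity again and gives the \emph{wrong} direction for the upper bound on $\Cap_\beta(A)$.

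The correct route for $\Cap_\beta(A)\lesssim\max_i\Cap_\beta(A_i)$: take $\mu$ to be \emph{any} probability measure on $A$ that nearly attains $\Cap_\beta(A)$ --- or better, argue directly. Let $\nu$ be the equilibrium measure on $A$, so $I_\beta(\nu)=1/\Cap_\beta(A)$. Decompose $\nu=\sum_{i=1}^k\nu_i$ where $\nu_i$ is $\nu$ restricted to $A_i\setminus(A_1\cup\dots\cup A_{i-1})$ (a genuine partition of $A$), with total masses $m_i:=\nu_i(A_i)$, $\sum_i m_i=1$. Then
\[
\frac{1}{\Cap_\beta(A)}=I_\beta(\nu)=\sum_{i,j}\sum_{x\in\mathrm{supp}\,\nu_i,\,y\in\mathrm{supp}\,\nu_j}\nu_i(x)\nu_j(y)(|x-y|\vee1)^{-\beta}\ge\sum_{i=1}^k I_\beta(\nu_i),
\]
dropping the cross terms $i\ne j$ (nonnegative). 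For each $i$ with $m_i>0$, $\bar\nu_i:=\nu_i/m_i$ is a probability measure on $A_i$, so $I_\beta(\nu_i)=m_i^2\,I_\beta(\bar\nu_i)\ge m_i^2/\Cap_\beta(A_i)\ge m_i^2/\max_j\Cap_\beta(A_j)$. Summing, $1/\Cap_\beta(A)\ge\bigl(\sum_i m_i^2\bigr)/\max_j\Cap_\beta(A_j)\ge(1/k)/\max_j\Cap_\beta(A_j)$ by Cauchy--Schwarz ($\sum m_i^2\ge(\sum m_i)^2/k=1/k$). Rearranging gives $\Cap_\beta(A)\le k\max_j\Cap_\beta(A_j)$, as desired. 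Combining with the trivial directions completes the proof. The only subtlety --- and the one step deserving care --- is handling the diagonal and the convention $|x-y|\vee1$ consistently so that the restriction/rescaling identity $I_\beta(\nu_i)=m_i^2 I_\beta(\bar\nu_i)$ holds verbatim and the dropped cross terms are genuinely nonnegative; both hold because the kernel $(|x-y|\vee1)^{-\beta}$ is nonnegative everywhere, including on the diagonal.
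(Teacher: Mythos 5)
Your proof is correct, and for the one nontrivial inequality it takes a different (more self-contained) route than the paper. The trivial directions coincide: both you and the paper dispose of $\Cap_\beta(A)\ge\max_i\Cap_\beta(A_i)$ and the comparison between sum and max by monotonicity and the factor $k$. For the remaining bound $\Cap_\beta(A)\le k\max_i\Cap_\beta(A_i)$, the paper simply invokes subadditivity of capacity with a citation (\cite{lawler2010random}), whereas you reprove it from the energy definition: partition a (near-)minimizing measure $\nu$ on $A$ into pieces $\nu_i$ carried by $A_i\setminus(A_1\cup\dots\cup A_{i-1})$, drop the nonnegative cross terms to get $I_\beta(\nu)\ge\sum_i m_i^2 I_\beta(\nu_i/m_i)\ge\sum_i m_i^2/\Cap_\beta(A_i)$, and finish with $\sum_i m_i^2\ge 1/k$. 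This is exactly the standard proof of subadditivity (indeed, bounding $\sum_i m_i^2/\Cap_\beta(A_i)\ge\bigl(\sum_i m_i\bigr)^2/\sum_i\Cap_\beta(A_i)$ by Cauchy--Schwarz would give the full inequality $\Cap_\beta(A)\le\sum_i\Cap_\beta(A_i)$), so what your argument buys is independence from the cited reference --- useful here since the lemma is applied with general exponents $\beta$, while the quoted result in \cite{lawler2010random} is stated for the random-walk capacity --- at the cost of a few extra lines; the paper's version is shorter but leans on the external subadditivity statement. Two cosmetic points: the false start with the equilibrium measure of $A_{i_0}$ should be deleted, and the use of an exact equilibrium measure on $A$ is harmless for finite sets (or replace it by an $\varepsilon$-near minimizer and let $\varepsilon\to0$, as you note), so there is no gap.
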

\begin{proof}
Capacity is clearly monotone, so $\Cap_\beta(A)\ge\max\braces{\Cap_\beta(A_1),\dots,\Cap_\beta(A_k)}$. 
Capacity is also subadditive (see for instance \cite[Proposition 6.5.3]{lawler2010random}), so
\[
\Cap_\beta(A)\le\Cap_\beta(A_1)+\dots+\Cap_\beta(A_k)
\le k\max\braces{\Cap_\beta(A_1),\dots,\Cap_\beta(A_k)}.
\]
\end{proof}

We are now ready to prove Theorem \ref{thm:rw+rw}.
\begin{theorem}
Let $d,k\ge 1$, let $0<\alpha_i,\beta_i<d$, $i=1,2,\dots,k$ such that 
\begin{align}
\label{eq:condition_gamma}
\gamma:=\alpha_1+\dots+\alpha_k-(k-1)d>0,
\end{align}
\[
\beta_1+\dots+\beta_k-(k-1)d>0.
\]
Let $\mathcal R_1,\dots,\mathcal R_k$ be independent random sets in $\mathbb Z^d$ with the following property: for every finite set $B\subset\mathbb Z^d$ containing the origin and every $|x|\ge 2\diam(B)$,
we have
\begin{equation}
\label{eq:condition_B}
{\mathbb P(\mathcal R_i\cap (x+B)\ne\emptyset)}
\asymp 
|x|^{-\beta_i}
\Cap_{\alpha_i}(B),\quad i=1,2,\dots,k.
\end{equation}
Let $A\subset\mathbb Z^d$ be any finite set containing the origin. When
\begin{align}
\label{eq:rA}
|x|\ge 2\diam(A)+\max_{1\le i\le k}\1_{\alpha_i\le\beta_i}\diam(A)^{\frac {d-\alpha_i}{d-\beta_i}}\log^{\frac{1}{d-\beta_i}}(\diam(A)+1),
\end{align}
then
\begin{align*}
\mathbb P((\mathcal R_1+\dots+\mathcal R_k)\cap (x+A)\ne\emptyset)
\asymp
|x|^{(k-1)d-\beta_1-\dots-\beta_k}\Cap_{\gamma}(A),
\end{align*}
where the constant in $\asymp$ only depends on $d,k$ and $(\alpha_i),(\beta_i)$ (independent of $A,x$).

\end{theorem}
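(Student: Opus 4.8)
The strategy is to reduce the statement about $k$ general random sets to the two-set fractal percolation estimate of Theorem~\ref{thm:main} by an inductive ``sandwiching'' argument: each random set $\mathcal R_i$ hits translates of $B$ with the same capacity-profile $|x|^{-\beta_i}\Cap_{\alpha_i}(B)$ as a suitable discrete fractal percolation $Q_d(p_i;n)$ observed at scale $n=\log_2|x|$, and the Minkowski sum $\mathcal R_1+\dots+\mathcal R_k$ should therefore behave like $Q_d(p_1;n)+\dots+Q_d(p_k;n)$, which by iterating Corollary~\ref{cor:main_continuous} collapses to a single fractal percolation whose parameter encodes $\gamma$ and $\beta_1+\dots+\beta_k$. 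Concretely, first I would record that Theorem~\ref{thm:main} (together with \eqref{eq:FP_cap}) extends from two to $k$ fractal percolations by induction on $k$: writing $\mathcal Q:=Q_d(p_1;n)+\dots+Q_d(p_{k-1};n)$ and conditioning on $\mathcal Q$, the probability $\mathbb P((\mathcal Q+Q_d(p_k;n))\cap A\ne\emptyset)=\mathbb E[\mathbb P(Q_d(p_k;n)\cap(A-\mathcal Q)\ne\emptyset\mid\mathcal Q)]$ is $\asymp q_k^n\,\mathbb E[\Cap_{-\log_2 q_k}(A-\mathcal Q)]$, and the inductive hypothesis applied to $\mathcal Q$ (via its own capacity description, which is what the induction delivers) lets one peel off one set at a time; the parameters add up exactly because $-\log_2(2^{(k-1)d}p_1\cdots p_k)=(\text{sum of individual }\beta\text{'s})-(k-1)d$ when one sets $p_i=2^{-\gamma_i}$ appropriately.

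The second ingredient is a comparison between a general random set $\mathcal R_i$ and fractal percolation \emph{at the level of hitting capacities}. Fix $n$ with $2^{n}\asymp|x|$. For a target set $A'\subset\Delta_n$, both $\mathbb P(\mathcal R_i\cap(x+A')\ne\emptyset)$ and $\mathbb P(Q_d(2^{-\beta_i};n)\cap A'\ne\emptyset)$ are $\asymp 2^{-\beta_i n}\Cap_{\alpha_i}(A')$ — the first by hypothesis \eqref{eq:condition_B}, provided $2^n\gtrsim\diam(A')$, and the second by \eqref{eq:FP_cap}, provided $\alpha_i=\beta_i$. Since in general $\alpha_i\ne\beta_i$, the honest device is: run $Q_d(p_i;n)$ with $p_i=2^{-\beta_i}$ so the $n$-dependence matches $|x|^{-\beta_i}$, but contract/dilate the target set so that the capacity parameter is corrected from $\beta_i$ to $\alpha_i$; this is precisely the content of Corollary~\ref{cor:cap_a_b}, which compares $\Cap_{\alpha_i}(A')$ with $\mathbb E[\Cap_{\beta_i}(A'+Q_d(2^{\beta_i-\alpha_i-d};m_i))]$ for an appropriate auxiliary scale $m_i$. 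The role of the condition \eqref{eq:rA} on $|x|$ is exactly to guarantee $m_i\le n$ and that the error terms $(2^d p)^{m_i\vee\log_2\diam}$ in Corollaries~\ref{cor:main_continuous}--\ref{cor:cap_a_b} are $\asymp1$: when $\alpha_i>\beta_i$ one needs only $|x|\ge2\diam(A)$, and when $\alpha_i\le\beta_i$ the stated $\diam(A)^{(d-\alpha_i)/(d-\beta_i)}\log^{1/(d-\beta_i)}$ term is what makes the logarithmic slack in \eqref{eq:FP_cap}-type bounds harmless.

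Assembling: I would prove the lower bound and upper bound separately. For the lower bound, choose near-optimal measures realizing $\Cap_\gamma(A)$, use a Paley–Zygmund/second-moment computation as in Lemma~\ref{lem:main_lower} but now with the $\mathcal R_i$'s in place of the fractal percolations — the only inputs needed are the one-point estimate $\mathbb P(y\in x+\mathcal R_i)$ (from \eqref{eq:condition_B} with $B$ a single point, giving $\asymp|x|^{-\beta_i}$) and the two-point estimate, which again follows from \eqref{eq:condition_B} applied to two-point sets — and then the same capacity-energy bookkeeping that produced the exponent $\gamma$ in Lemma~\ref{lem:up2}. For the upper bound, the cleanest route is the Markov-chain/Theorem~\ref{thm:salisbury} machinery is not directly available for general $\mathcal R_i$, so instead I would use the hypothesis \eqref{eq:condition_B} as a black box: condition on $\mathcal R_1,\dots,\mathcal R_{k-1}$, apply \eqref{eq:condition_B} for $\mathcal R_k$ to the random target set $(x+A)-(\mathcal R_1+\dots+\mathcal R_{k-1})$ — which requires a diameter/location control on that random set, handled by restricting to the (overwhelmingly likely) event that the relevant points of each $\mathcal R_i$ lie within $O(|x|)$ of the origin — reducing to $\mathbb E[\Cap_{\alpha_k}((x+A)-\sum_{i<k}\mathcal R_i)]$, and then induct, using Lemma~\ref{lem:divide} to break the translated sets into pieces of controlled diameter so that the hypothesis is applicable uniformly. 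The main obstacle, and where the bulk of the work lies, is this upper-bound induction: one must control $\mathbb E[\Cap_{\alpha_k}(\,\cdot\,)]$ of a random Minkowski-translate without the exact ``percolation-on-a-tree'' structure, so the argument has to route through the fractal-percolation comparison of the previous paragraph (replace each $\mathcal R_i$, in the capacity estimate only, by $Q_d(2^{-\beta_i};n)$ suitably rescaled), after which the $k$-fold version of Theorem~\ref{thm:main} finishes the job; verifying that the rescalings compose correctly and that the diameter restriction \eqref{eq:rA} is exactly what licenses every application is the delicate bookkeeping step.
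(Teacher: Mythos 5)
Your overall instinct—replace each $\mathcal R_i$ by a fractal percolation and then invoke the machinery of Theorem \ref{thm:main} and Corollary \ref{cor:cap_a_b}—is the right one, but both halves of your argument have genuine gaps. For the lower bound, your plan to run a Paley--Zygmund computation directly on the $\mathcal R_i$'s does not get off the ground: the hypothesis \eqref{eq:condition_B} only controls \emph{hitting} probabilities up to multiplicative constants, and this does not yield the two-point \emph{containment} upper bound $\mathbb P(y\in\mathcal R_i,\,z\in\mathcal R_i)$ that a second-moment argument requires. Inclusion--exclusion applied to the two-point set $\{y,z\}$ gives $\mathbb P(y,z\in\mathcal R_i)=\mathbb P(y\in\mathcal R_i)+\mathbb P(z\in\mathcal R_i)-\mathbb P(\mathcal R_i\cap\{y,z\}\ne\emptyset)$, and since the three terms are only known up to constants (and $\Cap_{\alpha_i}(\{y,z\})\asymp\Cap_{\alpha_i}(\{y\})$), the cancellation is destroyed: the resulting bound has no decay in $|y-z|$ and is useless for the energy computation. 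The paper avoids this entirely: it never needs correlation estimates for the general sets. Instead it conditions on the other sets, applies \eqref{eq:condition_B} (upgraded via Lemma \ref{lem:divide} to targets lying in an annulus $\Psi_n$) to the \emph{random} target $-\mathcal R_2\oplus\dots\oplus(x+A)$, converts the resulting $\Cap_{\alpha_1}$ back into a hitting probability of $Q_d(2^{-\alpha_1};n)$ via \eqref{eq:FP_cap}, and iterates; all second-moment work then happens inside Theorem \ref{thm:main}/Corollary \ref{cor:cap_a_b}, where the tree structure supplies it. Note also that the correct replacement is $Q_d(2^{-\alpha_i};\cdot)$ (matching the capacity index, with the scale mismatch absorbed into an explicit factor $2^{(\alpha_i-\beta_i)j}$), not $Q_d(2^{-\beta_i};n)$ with an after-the-fact ``correction of the capacity parameter''---\eqref{eq:FP_cap} ties the capacity index to $-\log_2 p$, so the two exponents cannot be tuned independently as your sketch suggests.

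For the upper bound, your proposal to condition on $\mathcal R_1,\dots,\mathcal R_{k-1}$ and apply \eqref{eq:condition_B} to the random target, ``restricting to the overwhelmingly likely event that the relevant points of each $\mathcal R_i$ lie within $O(|x|)$ of the origin,'' does not work as stated: for the motivating examples (ranges of random walks or branching random walks) the sets are a.s.\ unbounded, the complementary event has probability one rather than being negligible, and distant pieces of $\mathcal R_i$ genuinely contribute to the hitting event. What is actually needed—and what the paper does—is a union bound over dyadic annuli $\Psi_j$ as in \eqref{eq:union_bounds}, followed by a multi-scale summation over $(j_1,\dots,j_k)$ in which the geometric constraint $2^{j_1}\gtrsim 2^n$ and the hypothesis $\beta_1+\dots+\beta_k>(k-1)d$ make the sum converge and concentrate at the scale $n\approx\log_2|x|$; it is precisely at this summation step (comparing $2^{(d-\beta_i)j_1}$ with the $\log_2\diam(A)\,2^{(d-\alpha_i)\log_2\diam(A)}$ terms produced by Corollary \ref{cor:cap_a_b}) that the threshold \eqref{eq:rA} is used, not merely to ensure some auxiliary scale $m_i\le n$. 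Without this quantitative multi-scale control your induction has no mechanism to rule out contributions from scales much larger than $|x|$, so the upper bound is not established.
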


\begin{proof}
For each $n$, abbreviate 
\[
\Psi_n=[-2^{n-1},2^{n-1}]^d\setminus [-2^{n-2},2^{n-2}]^d,
\]
and we assume all different instances of fractal percolations appearing in this proof to be independent.

Take $n=\lceil\log_2 |x|\rceil$. 
By \eqref{eq:rA}, we know that $x+A\subset\Psi_{n+1}\cup\Psi_n\cup\Psi_{n-1}$. By Lemma \ref{lem:divide}, it suffices to assume $x+A\subset\Psi_n$ (otherwise cut the set into pieces in $\Psi_{n+1},\Psi_n,\Psi_{n-1}$, and apply the argument for $n+1,n,n-1$), and it remains to show that
\begin{align*}
\mathbb P((\mathcal R_1+\dots+\mathcal R_k)\cap (x+A)\ne\emptyset)
\asymp
2^{((k-1)d-\beta_1-\dots-\beta_k)n}\Cap_{\gamma}(A).
\end{align*}

Moreover, by \eqref{eq:condition_B} and Lemma \ref{lem:divide}, it is not hard to observe that for every $B\subset\Psi_n$, 
\begin{align}\label{eq:divide}
\mathbb P(\mathcal R_i\cap B\ne\emptyset)\asymp 2^{-n\beta_i}\Cap_{\alpha_i}(B).
\end{align}

We first prove the lower bound. Let 
\[
M\oplus N:=(M+N)\cap\Psi_n,
\]
then
\begin{align*}
\mathbb P((\mathcal R_1+\dots+\mathcal R_k)\cap (x+A)\ne\emptyset)
&\ge
\mathbb P((\mathcal R_1\oplus\dots\oplus\mathcal R_k)\cap (x+A)\ne\emptyset)\\
&=\mathbb P(\mathcal R_1\cap(-\mathcal R_2\oplus\dots\oplus-\mathcal R_k\oplus (x+A))\ne\emptyset).
\end{align*}
Notice that 
\[
-\mathcal R_2\oplus\dots\oplus-\mathcal R_k\oplus (x+A)\subset\Psi_n,
\]
by \eqref{eq:divide} and \eqref{eq:FP_cap},
\begin{equation}\label{eq:iterate_1}
\begin{aligned}
&\mathbb P(\mathcal R_1\cap(-\mathcal R_2\oplus\dots\oplus-\mathcal R_k\oplus (x+A))\ne\emptyset)\\
\asymp &2^{-\beta_1 n}\Cap_{\alpha_1}(-\mathcal R_2\oplus\dots\oplus-\mathcal R_k\oplus (x+A))\\
\asymp &2^{(\alpha_1-\beta_1) n}\mathbb P(Q_d(2^{-\alpha_1};n)\cap (-\mathcal R_2\oplus\dots\oplus-\mathcal R_k\oplus (x+A))\ne\emptyset)\\
= &2^{(\alpha_1-\beta_1) n}\mathbb P((Q_d(2^{-\alpha_1};n)\oplus\mathcal R_2\oplus\dots\oplus\mathcal R_k)\cap (x+A))\ne\emptyset). 
\end{aligned}
\end{equation}

Iterate the same procedure \eqref{eq:iterate_1} for $\mathcal R_2,\dots\mathcal R_k$, we have
\begin{align*}
&\mathbb P((\mathcal R_1\oplus\dots\oplus\mathcal R_k)\cap (x+A)\ne\emptyset)
\gtrsim
2^{n\sum_{i=1}^k(\alpha_i-\beta_i)}\mathbb P((Q_1\oplus\dots\oplus Q_k)\cap (x+A)\ne\emptyset),
\end{align*}
where we denote $Q_i=Q_d(2^{-\alpha_i};n)$ for simplicity.

Notice that for a set $B\subset[-2^{n-1},2^{n-1}]^d$, 
\[
\mathbb P((Q_1\oplus Q_2)\cap B\ne\emptyset)
=\mathbb P((Q_1+ Q_2)\cap B\ne\emptyset),
\]
by Corollary \ref{cor:cap_a_b} (the condition $\alpha_i\in(0,d)$ and \eqref{eq:condition_gamma} guarantees that $2^{d-\alpha_1-\alpha_2}<1$), we have
\begin{equation}\label{eq:Q+Q_lower}
\begin{aligned}
&\mathbb P((Q_1\oplus\dots \oplus Q_k)\cap (x+A)\ne\emptyset)\\
=&\mathbb P((Q_1\oplus Q_2)\cap ((x+A)\oplus Q_3\oplus\dots\oplus Q_k)\ne\emptyset)\\
=&\mathbb P((Q_1+ Q_2)\cap ((x+A)\oplus Q_3\oplus\dots\oplus Q_k)\ne\emptyset)\\
\asymp&\mathbb P((Q_d(2^{d-\alpha_1-\alpha_2};n))\cap ((x+A)\oplus Q_3\oplus\dots\oplus Q_k)\ne\emptyset)\\
=&\mathbb P((Q_d(2^{d-\alpha_1-\alpha_2};n)\oplus Q_3\oplus\dots\oplus Q_k)\cap (x+A)\ne\emptyset).
\end{aligned}
\end{equation}
This reduces the number of fractal percolations from $k$ to $k-1$.
The lower bound of Theorem \ref{thm:rw+rw} now follows by iteration of \eqref{eq:Q+Q_lower} and the fact $\Cap(A)=\Cap(x+A)$.

For the upper bound, by union bounds,
\begin{equation}\label{eq:union_bounds}
\begin{aligned}
&\mathbb P((\mathcal R_1+\dots+\mathcal R_k)\cap (x+A)\ne\emptyset)\\
=&
\mathbb P(\mathcal R_1\cap(-\mathcal R_2+\dots+-\mathcal R_k+ (x+A))\ne\emptyset)\\
\le&\sum_{j}
\mathbb P(\Psi_j\cap\mathcal R_1\cap(-\mathcal R_2+\dots+-\mathcal R_k + (x+A))\ne\emptyset).
\end{aligned}
\end{equation}
We need this step because even though $x+A\subset\Psi_n$, $-\mathcal R_2+\dots+-\mathcal R_k + (x+A)$ can be unbounded.
By \eqref{eq:divide},
\begin{align*}
&\mathbb P(\Psi_j\cap\mathcal R_1\cap(-\mathcal R_2+\dots+-\mathcal R_k + (x+A))\ne\emptyset)\\
\asymp&2^{-\beta_1 j}\Cap_{\alpha_1}((-\mathcal R_2+\dots+-\mathcal R_k + (x+A))\cap\Psi_j).
\end{align*}
By \eqref{eq:FP_cap}, we conclude that
\begin{align*}
&\mathbb P((\mathcal R_1+\dots+\mathcal R_k)\cap (x+A)\ne\emptyset)\\
\lesssim&\sum_j 2^{-\beta_1 j}\Cap_{\alpha_1}((-\mathcal R_2+\dots+-\mathcal R_k + (x+A))\cap\Psi_j)\\
\lesssim&\sum_j 2^{(\alpha_1-\beta_1)j}\mathbb P\pars{
Q_d(2^{-\alpha_1};j)\cap
(-\mathcal R_2+\dots+-\mathcal R_k + (x+A))\ne\emptyset
}\\
=&\sum_j 2^{(\alpha_1-\beta_1)j}\mathbb P\pars{
(Q_d(2^{-\alpha_1};j)+\mathcal R_2+\dots+\mathcal R_k)\cap
(x+ A)\ne\emptyset
}.
\end{align*}
Iterate for each $\mathcal R_i$, we have
\begin{equation}\label{eq:iterate_2}
\begin{aligned}
&\mathbb P((\mathcal R_1+\dots+\mathcal R_k)\cap A\ne\emptyset)\\
\lesssim&\sum_{j_1,\dots,j_k}2^{\sum_i(\alpha_i-\beta_i)j_i}\mathbb P\pars{
(Q_d(2^{-\alpha_1};j_1)+\dots+Q_d(2^{-\alpha_k};j_k))\cap
(x+A)\ne\emptyset}.
\end{aligned}
\end{equation}
Since $\mathcal R_1,\dots,\mathcal R_k$ are exchangeable, it suffices to study the case $j_1\ge\dots\ge j_k$, i.e.
\[
\sum_{j_1,\dots,j_k}\1_{j_1\ge\dots\ge j_k}2^{\sum_i(\alpha_i-\beta_i)j_i}\mathbb P\pars{
(Q_d(2^{-\alpha_1};j_1)+\dots+Q_d(2^{-\alpha_k};j_k))\cap
(x+A)\ne\emptyset}
\]
Note that in order that the intersection happens, sum of the blocks $[-2^{j_i-1},2^{j_i-1}]^d$ must reach $x+A\subset\Psi_n$, hence  
\begin{align}\label{eq:i1}
k2^{j_1}\ge 2^{j_1}+\dots+2^{j_k}\ge 2^{n-2}.
\end{align}
By \eqref{eq:FP_cap} (and the fact that capacity is translational invariant),
\begin{equation*}
\begin{aligned}
&\mathbb P\pars{
(Q_d(2^{-\alpha_1};j_1)+\dots+Q_d(2^{-\alpha_k};j_k))\cap
(x+A)\ne\emptyset}\\
\lesssim&2^{-\alpha_1 j_1}
\mathbb E\bracks{
\Cap_{\alpha_1}(Q_d(2^{-\alpha_2};j_2)+\dots+Q_d(2^{-\alpha_k};j_k)+A)}.
\end{aligned}
\end{equation*}
Now $j_2$ becomes the largest among the remaining index, and we can use Corollary \ref{cor:cap_a_b} and \eqref{eq:divide} to obtain
\begin{equation*}
\begin{aligned}
&\mathbb E\bracks{
\Cap_{\alpha_1}(Q_d(2^{-\alpha_2};j_2)+\dots+Q_d(2^{-\alpha_k};j_k)+A)}\\
\lesssim&2^{(d-\alpha_2)(j_2\vee\log_2\diam(A))}
\mathbb E\bracks{
\Cap_{\alpha_1+\alpha_2-d}(Q_d(2^{-\alpha_3};j_3)+\dots+Q_d(2^{-\alpha_k};j_k)+A)}.
\end{aligned}
\end{equation*}
Iterate this argument for $j_3,\dots,j_k$, given that as $j_1\ge j_2\ge\dots\ge j_k$, we have (recall $\gamma=\alpha_1+\dots+\alpha_k-(k-1)d>0$)
\begin{equation}\label{eq:iterate_3}
\begin{aligned}
&\mathbb P\pars{
(Q_d(2^{-\alpha_1};j_1)+\dots+Q_d(2^{-\alpha_k};j_k))\cap
A\ne\emptyset}\\
\lesssim&
2^{-\alpha_1j_1}2^{\sum_{i\ge 2}(d-\alpha_i)(j_i\vee\log_2\diam(A))}
\Cap_{\gamma}(A).
\end{aligned}
\end{equation}
Therefore,
\begin{align*}
&\sum_{j_1,\dots,j_k}\1_{j_1\ge\dots\ge j_k}
2^{\sum_i(\alpha_i-\beta_i)j_i}
2^{-\alpha_1j_1}2^{\sum_i(d-\alpha_i)(j_i\vee\log_2\diam(A))}
\Cap_{\gamma}(A)\\
\le&\sum_{j_1\ge n-2-\log_2k} 2^{-\beta_1j_1}\Cap_\gamma(A)\cdot\prod_{i=2}^k\pars*{\sum_{j\le j_1}2^{(\alpha_i-\beta_i)j+(d-\alpha_i)(j\vee\log_2\diam(A))}}\\
\lesssim &\sum_{j_1\ge n-2-\log_2k} 2^{-\beta_1j_1}\Cap_\gamma(A)\cdot\prod_{i=2}^k\pars*{2^{(d-\beta_i)j_1}+\log_2\diam(A)2^{(d-\alpha_i)\log_2\diam(A)}\1_{\alpha_i\le \beta_i}}.
\end{align*}
By \eqref{eq:rA} and \eqref{eq:i1}, the term on $\diam(A)$ is negligible, hence 
\begin{align*}
&\sum_{j_1,\dots,j_k}\1_{j_1\ge\dots\ge j_k}
2^{\sum_{i=1}^k(\alpha_i-\beta_i)j_i}
2^{-\alpha_1j_1}2^{\sum_{i=2}^k(d-\alpha_i)(j_i\vee\log_2\diam(A))}
\Cap_{\gamma}(A)\\
\lesssim &\sum_{j_1\ge n-2-\log_2k} 2^{-\beta_1j_1}\Cap_\gamma(A)\cdot{2^{\sum_{i=2}^k(d-\beta_i)j_1}}
\asymp 2^{n((k-1)d-\beta_1-\dots-\beta_k)}\Cap_\gamma(A).
\end{align*}
The conclusion follows by combining this with \eqref{eq:iterate_2} and \eqref{eq:iterate_3}.
\end{proof}

\begin{remark}
The proof above relies on the intuition that hitting events are dominated by that of a typical scale $\Psi_n$. When $\diam(A)$ is too large, this intuition may fails. Take for instance $A$ to be a ball of radius $c|x|$, and take $(\mathcal R_i)$ as ranges of simple random walks, then one single range hits $A$ with positive probability, so \eqref{eq:union_bounds} is no longer reasonable.
\end{remark}

\subsection{Remark on the case \texorpdfstring{$\beta\le 0$}{} in \texorpdfstring{\eqref{eq:nonempty}}{}}
\label{sec:log_cap}

If $2^dpq\ge1$, by \eqref{eq:second_moment_L} we have
\[
{\mathbb P((Q_d(p;m)+\widehat Q_d(q;n))\cap A\ne\emptyset)}
\gtrsim 
\begin{cases}
q^{n-m},\,2^dpq>1,\\
m^{-1} q^{n-m},\,2^dpq=1,
\end{cases}
\]
as long as $A$ is nonempty.
Moreover, 
\begin{align*}
{\mathbb P((Q_d(p;m)+\widehat Q_d(q;n))\cap A\ne\emptyset)}
=&\mathbb P({\widehat Q_d(q;n)\cap (A+Q_d(p;m))\ne\emptyset)}\\
\lesssim&q^{-n}\Cap_{\log_2 q}(Q_d(p;m)-A)\\
\lesssim&q^{-n}(2^m+\diam(A))^{\log_2 q}\\
\lesssim&q^{n-(m\vee \log_2\diam(A))}.
\end{align*}

When $m\gtrsim \log_2\diam(A)$
and $2^dpq>1$, we obtain 
\begin{align*}
\mathbb P((Q_d(p;m)+\widehat Q_d(q;n))\cap A\ne\emptyset)\asymp q^{n-m}.
\end{align*}
When $m\ll\log_2\diam(A)$ or $m\gtrsim \log_2\diam(A)$
and $2^dpq=1$, the upper and lower bounds do not meet, yet in fact they are both optimal (by taking $A=\{0\},\{0,2^m\},$ or $[-2^m,2^m)^d\cap\mathbb Z^d$), which indicates that only knowing information on diameter of $A$ is insufficient for a sharp estimate. In particular, we conjecture that the case $2^dpq=1$ can be related to log-capacity of the target set $A$.

% \bibliographystyle{plain}
% \bibliography{1}
\end{document}